\newcommand{\R}{{\mathbb R}}
\DeclareMathOperator{\argmin}{argmin}
\DeclareMathOperator{\prox}{prox}
\newcommand{\cH}{{\mathcal H}}
\newcommand{\demi}{\frac{1}{2}}
\newcommand{\ie}{{\it i.e.}\,\,}
\newlength{\textlarg} 
\newcommand{\rinf}{\R\cup\{+\infty\}}
\newcommand{\eqdef}{:=}
\newcommand{\dotp}[2]{\langle #1,\,#2 \rangle}
\newcommand{\norm}[1]{\left\|{#1}\right\|}
\newcommand{\pa}[1]{\left({#1}\right)}
\newcommand{\interior}{{\rm int}\kern 0.06em}
\newcommand{\inte}{{\rm int}\kern 0.06em}
\newcommand{\cl}{{\rm cl}\kern 0.06em}
\newcommand{\zer}{{\rm zer}\kern 0.06em}
\newcommand{\gph}{{\rm gph}\kern 0.06em}
\newcommand{\dom}{{\rm dom}\kern 0.06em}
\newcommand{\pr}{{\rm pr}\kern 0.06em}
\newcommand{\e}{\varepsilon}
\def\d{\delta}
\def\<{\langle}
\def\>{\rangle}
\renewcommand*{\backrefalt}[4]{%
\ifcase #1 %
(Not cited)%
\or
(Cited on p.~#2)%
\else
(Cited on pp.~#2)%
\fi
}
\begin{document}

\title{Accelerated gradient methods with strong convergence to the minimum norm minimizer: a dynamic approach combining time scaling, averaging, and Tikhonov regularization}

\titlerunning{Inertial gradient dynamics with Tikhonov regularization}

\author{Hedy ATTOUCH    \and Zaki CHBANI   \and Hassan RIAHI}

\institute{
Hedy ATTOUCH  \at IMAG, Univ. Montpellier, CNRS, Montpellier, France\\
hedy.attouch@umontpellier.fr,
\and Zaki CHBANI   \and Hassan RIAHI\\
 Cadi Ayyad University \\ S\'emlalia Faculty of Sciences 
 40000 Marrakech, Morocco\\
   chbaniz@uca.ac.ma  \and h-riahi@uca.ac.ma 
}
\maketitle


\begin{abstract}
  In a Hilbert framework, for convex differentiable optimization, we consider accelerated gradient methods obtained by combining temporal scaling and averaging techniques with Tikhonov regularization.
 We start from the continuous steepest descent dynamic with an additional Tikhonov regularization term whose coefficient vanishes asymptotically.
 We provide an extensive Lyapunov analysis of this first-order evolution equation.
Then we apply to this dynamic the  method of time scaling and averaging recently introduced by Attouch, Bot and Nguyen. We thus obtain an inertial dynamic
which involves viscous damping associated with Nesterov's method, implicit Hessian damping and Tikhonov regularization.
Under an appropriate setting of the parameters, just using Jensen's inequality, without the need for another Lyapunov analysis, we show that the trajectories have at the same time several remarkable properties: they provide a rapid convergence of values,  fast convergence of the  gradients to zero, and
strong convergence to the minimum norm minimizer.
These results complete and improve the previous results obtained by the authors.
\end{abstract}

\medskip

\keywords{Accelerated gradient methods; convex optimization; damped inertial dynamics;     Hessian driven damping;  Tikhonov approximation;
time scaling and averaging.}

\medskip

\subclass{37N40, 46N10, 49M30, 65K05, 65K10, 65K15, 65L08, 65L09, 90B50, 90C25.}



\section{Introduction}

Throughout the paper, $\mathcal H$ is a real Hilbert space which is endowed with the scalar product $\langle \cdot,\cdot\rangle$, with $\|x\|^2= \langle x,x\rangle    $ for  $x\in \mathcal H$.
Given $f : \mathcal H \rightarrow \mathbb R$  a general convex function, continuously differentiable,
we will develop fast gradient methods with new favorable properties to solve the  minimization problem
\begin{equation}\label{edo0001}
 \min \left\lbrace  f (x) : \ x \in \mathcal H \right\rbrace.
\end{equation}
Our approach is based on  the convergence properties as $t \to +\infty$ of the trajectories $t \mapsto x(t)$  generated by the first-order evolution equation  
\begin{equation}\label{damped-id}
 \dot{x}(t) +  \nabla f (x(t)) + \varepsilon (t) x(t) =0,
\end{equation}
which can be interpreted as a Tikhonov regularization of the continuous  steepest descent dynamics.
As a basic ingredient, (\ref{damped-id}) involves
    a  Tikhonov regularization term with positive coefficient $\varepsilon (\cdot)$ that satisfies $\lim_{t\rightarrow +\infty} \varepsilon(t) = 0$, which preserves the equilibria. 
    
    \noindent Throughout the paper, we assume that the objective function  $f$ and the Tikhonov regularization parameter $\varepsilon (\cdot)$ satisfy the following hypothesis: 
\begin{align*}
( \mathcal{A}) \;\begin{cases}
(1) \; \; f : \mathcal H \rightarrow \mathbb R \mbox{ is convex, of class } \mathcal C^1,  \nabla f \mbox{ is Lipschitz continuous on  bounded sets}; \vspace{1mm} \\
(2) \; \; S := \mbox{argmin}_{\cH} f \neq \emptyset. \mbox{ We denote by } x^*  \mbox{ the element of minimum norm of } S;   \vspace{1mm}\\
(3)\; \;  \varepsilon : [t_0 , +\infty [ \to \mathbb R^+  \mbox{ is    nonincreasing, of class } \mathcal C^1, \mbox{ such that }\  \lim_{t \to \infty} \varepsilon (t) =0.
\end{cases}
\end{align*}
We first provide an extensive Lyapunov analysis of the first-order evolution equation (\ref{damped-id}).
Then we apply to (\ref{damped-id}) the recent method of time scaling and averaging introduced by Attouch, Bot and Nguyen \cite{ABotNguyen}. Denoting by $t \mapsto \tau(t)$ the time scaling, we thus obtain the system
\vspace{-1mm}
\begin{equation}\label{1st-damped-id5}
\ddot x(t) +   \frac{1+ \ddot\tau(t)}{\dot\tau(t)} \dot x(t) + \nabla f\Big(x(t)+\dot\tau(t)\dot x(t)\Big) + \varepsilon (\tau(t)) \left( x(t)+\dot\tau(t)\dot x(t) \right) =0,
\end{equation}
\vspace{-1mm}
\noindent which involves viscous damping, implicit Hessian damping, and Tikhonov regularization.

\smallskip

\noindent Of particular interest is the case  $\tau(s) = \frac{s^2}{2(\alpha - 1)}$ for some $\alpha>1$. Then  $ \frac{1+ \ddot\tau(s)}{\dot\tau(s)} = \frac{\alpha}{s}$, and  \eqref{1st-damped-id5} becomes
\vspace{-1mm}
\begin{equation}\label{1st-damped-id05}
 \ddot x(s) +\frac{\alpha}{s}  \dot x(s) + \nabla f\left[x(s)+\dfrac{s}{\alpha - 1}\dot x(s)\right] + \varepsilon \left(\dfrac{s^2}{2(\alpha - 1)}\right) \left[x(s)+\dfrac{s}{\alpha - 1}\dot x(s)\right]=0.
\end{equation}
As a remarkable property, a judicious 
choice of $\varepsilon (\cdot)$  gives solution trajectories of (\ref{1st-damped-id05})
that satisfy the  following three favorable properties:

$\bullet$ Fast convergence of the values (with rate $\dfrac{1}{t^2}$), therefore comparable to the  dynamic model introduced by Su, Boyd, and Cand\`es (\cite{SBC}) of the Nesterov accelerated gradient method.

\smallskip

$\bullet$ Fast convergence of the gradients towards zero.

\smallskip

$\bullet$  Strong convergence of the trajectories towards the minimum norm solution. 

\smallskip

\noindent To our knowledge, this is the first time that these three properties are verified simultaneously by the trajectories of the same dynamic. 

For all the systems considered in the article, we take for granted the existence and uniqueness of the solution of the Cauchy problem. For second-order evolution equations, this is obtained by applying the classical Cauchy-Lipschitz theorem to the Hamiltonian formulation (recall that $\nabla f$ is assumed locally Lipschitz continuous). The passage from the local solution to the global solution results from the energy estimations which are established.

At the end of the paper, we will show that many results have  a natural extension to the case of a convex lower semicontinuous proper function
$f:\cH \to \rinf$.

\subsection{Presentation of the results}
We present two model situations corresponding to cases of $\varepsilon (\cdot)$ of particular interest.

\subsubsection{Case $\varepsilon (t)= \dfrac{\delta}{t} $}

The second-order evolution equation
\begin{equation} \label{model-1}
 \ddot x(t) +  \frac{\alpha + 2\delta}{t} \dot x(t) + \nabla f\left[x(t)+\dfrac{t}{\alpha - 1}\dot x(t)\right] + \dfrac{2\delta (\alpha - 1)}{t^2} x(t)=0
\end{equation}
is obtained by taking in (\ref{1st-damped-id05}) $\varepsilon (t)= \dfrac{\delta}{t} $, with  $\delta >1$. 
In Theorem \ref{thm:model-inertial-b}, we show that for  $\alpha >3$ and $\delta >1$, the trajectories generated by  (\ref{model-1})
 have at the same time the following properties:
 
 \smallskip 

\noindent \; $\bullet$ Rapid convergence of values:

$$
f(x(t))-\min_{\cH} f= \mathcal O\left( \frac{1}{t^{2}}  \right) \mbox{ as } \; t \to +\infty .
$$ 	
According to  $\alpha >3$  and $\delta >1$, we have $\alpha + 2\delta >5$. This is in agreement with the fact that taking the damping coefficient $\gamma/t $ with $\gamma$ sufficiently large in the dynamic model of Su, Boyd and Cand\`es (\cite{SBC}) of the accelerated gradient of Nesterov method is beneficial (especially in the presence of strong convexity), see \cite{AC10}.

\smallskip

\noindent \; $\bullet$ Rapid convergence of the gradients towards zero:
$$\| \nabla f(x(t)) \| = \mathcal O\left( \dfrac{1}{t} \right)  \mbox{ as } \; t \to +\infty.$$  

\smallskip

\noindent \; $\bullet$ Strong convergence of  $x(t)$
	 to the minimum norm solution  $x^*$, as  $t \to +\infty $.

\subsubsection{Case $\varepsilon (t)= \dfrac{1}{t^r} $} The second-order evolution equation
\begin{equation}\label{2d-damped-id-p01}
 \ddot x(t) + \left( \frac{\alpha}{t}  +\dfrac{2^r (\alpha - 1)^{r-1}}{t^{2r-1}}  \right)  \dot x(t) + \nabla f\left(x(t)+\dfrac{t}{\alpha - 1}\dot x(t)\right) + 
\dfrac{2^r(\alpha - 1)^r}{t^{2r}} x(t)=0.
\end{equation}
is obtained by taking in (\ref{1st-damped-id05}) $\varepsilon (t)= \dfrac{1}{t^r} $, with  $0<r<1$. 
Under an appropriate setting of the parameters, just using Jensen's inequality, without the need for another Lyapunov analysis, we show 
in Theorem \ref{thm:model-inertial}  that for arbitrary  $0<r<1$, the trajectories generated by 
(\ref{2d-damped-id-p01}) have at the same time the following properties: 

\smallskip

\noindent \; $\bullet$ Rapid convergence of values:
 for $\alpha >1$, and  $0<r<1$
$$
f(x(t))-\min_{\cH} f= \mathcal O\left( \frac{1}{t^{\alpha - 1}} + \frac{1}{t^{2r}} \right) \mbox{ as } \; t \to +\infty.
$$
By taking $\alpha \geq 3$, and $r \simeq 1$, we thus have 
$f(x(t))-\min_{\cH} f= \mathcal O\left( \dfrac{1}{t^{2r}} \right)$. So  
we can approach arbitrarily the optimal convergence rate $1/t^2$.

\smallskip

\noindent \; $\bullet$ Rapid convergence of the gradients towards zero:
$$\| \nabla f(x(t)) \|^2 = \mathcal O\left( \frac{1}{t^{\alpha - 1}} + \frac{1}{t^{2r}} \right)  \mbox{ as } \; t \to +\infty.$$  
 
\noindent \; $\bullet$  Strong convergence of  $x(t)$
	 to the minimum norm solution  $x^*$, as  $t \to +\infty $.

\subsubsection{Comments}

In the above results, the terminology ``implicit Hessian damping" comes from the fact that by a Taylor expansion (as $t \to +\infty$ we have $\dot{x}(t) \to 0$ which justifies using Taylor expansion), we have
\[
\nabla f\left(x(t)+\beta(t)\dot x(t)\right)\approx \nabla f (x(t)) + \beta(t)\nabla^2 f(x(t))\dot{x}(t) ,
\]
hence making the Hessian damping appear indirectly in (\ref{model-1}) and \eqref{2d-damped-id-p01}.
These results improve the previous results obtained by the authors \cite{ACR}, \cite{ABCR}, \cite{ABCR2}, and which were based on Lyapunov's direct analysis of second-order damped inertial dynamics with Tikhonov regularization,
see also \cite{BCL}. In contrast, the Lyapunov analysis is now performed on the initial first-order evolution equation \eqref{damped-id}
which results in a significantly simplified mathematical analysis.
It is also important to note that the scaling and time averaging method allows these questions to be addressed in a unified way.
The numerical importance of the Hessian driven damping comes from the fact that it allows to significantly attenuates the oscillations which come naturally with inertial systems, see \cite{ACFR,ACFR-Optimisation}, \cite{SDJS}.

\subsection{Historical perspective}

Initially designed for the regularization of ill-posed inverse problems \cite{Tikh,TA}, the  field of application of the Tikhonov regularization was then considerably widened. 
The coupling of first-order in time  gradient systems with a Tikhonov approximation whose coefficient tends asymptotically towards zero has been highlighted in a series of papers  \cite{AlvCab}, \cite{Att2},   \cite{AttCom}, \cite{AttCza2}, \cite{BaiCom}, \cite{Cab}, \cite{CPS}, \cite{Hirstoaga}. 
Our approach builds on several previous works that have paved the way
concerning the coupling of damped second-order in time gradient systems with Tikhonov approximation. First studies  concerned  the heavy ball with friction system of Polyak \cite{Polyak},
where the damping coefficient $\gamma >0$ is  fixed. In   \cite{AttCza1} Attouch and Czarnecki considered the  system
\begin{equation}\label{HBF-Tikh}
 \ddot{x}(t) + \gamma \dot{x}(t) + \nabla f(x(t)) + \varepsilon (t) x(t) =0.
\end{equation}
In the slow parametrization case $\int_0^{+\infty} \varepsilon (t) dt = + \infty$, they proved that  any solution $x(\cdot)$ of \eqref{HBF-Tikh} converges strongly to the minimum norm element of $\argmin f$, see also \cite{Att-Czar-last}, \cite{Cabot-inertiel}, \cite{CEG},  \cite{JM-Tikh}. This hierarchical minimization result contrasts with the case without the Tikhonov regularization term, where the convergence holds only for weak convergence, and the limit depends on the initial data. 

\noindent In the quest for a faster convergence, the following system
with asymptotically vanishing damping
\begin{equation}\label{edo001-0}
 \mbox{(AVD)}_{\alpha, \varepsilon} \quad \quad \ddot{x}(t) + \frac{\alpha}{t} \dot{x}(t) + \nabla f (x(t)) +\varepsilon(t) x(t)=0,
\end{equation}
was studied by Attouch, Chbani, and Riahi in \cite{ACR}.
It is a Tikhonov regularization of the  dynamic
\begin{equation}\label{edo001}
 \mbox{(AVD)}_{\alpha} \quad \quad \ddot{x}(t) + \frac{\alpha}{t} \dot{x}(t) + \nabla f (x(t))=0,
\end{equation}
which was introduced by  Su, Boyd and
Cand\`es in \cite{SBC}. $\mbox{(AVD)}_{\alpha}$ is a low resolution ODE of the   accelerated gradient method of Nesterov \cite{Nest1,Nest2} and  of the Ravine method \cite{AF}, \cite{SBC}.
$ \mbox{(AVD)}_{\alpha}$ has been the subject of many recent studies which have given an in-depth understanding of the Nesterov accelerated gradient method, see  \cite{AAD1}, \cite{ABCR}, \cite{AC10}, \cite{ACPR},\cite{AP}, \cite{CD}, \cite{MME}, \cite{SBC}, \cite{Siegel}, \cite{WRJ}.

Recently, an original  approach has been developed by the authors, which is  based on  the heavy ball with friction method of Polyak in the
\textit{strongly convex case}. In this case, this autonomous dynamic if known to  
provide exponential convergence rates. To take advantage of this very good convergence property, they  considered the nonautonomous dynamic version of  the heavy ball method which at time $t$ is governed by the gradient of the regularized function
$x\mapsto f(x) + \frac{\varepsilon (t)}{2}\|x\|^2$, where the Tikhonov regularization parameter satisfies $\varepsilon (t) \to 0$ as $t\to +\infty$. This idea which was developed in \cite{ABCR}, \cite{ABCR2},  \cite{AL} is detailed below, and will serve us as a comparison to our results. Note that in the following statement the Hessian driven damping is now taken in the explicit form.

\begin{theorem}\label{thm:model-intro}
Take   $0<r<2$, \; $\delta>2$, \; $\beta >0$.
Let $x : [t_0, +\infty[ \to \mathcal{H}$ be a solution trajectory of
		\begin{equation}\label{particular-intro}
		\ddot{x}(t) + \frac{\d}{ \displaystyle{t^{\frac{r}{2}}}}\dot{x}(t) +\beta\nabla^{2} f\left(x(t) \right)\dot{x}(t)+ \nabla f\left(x(t) \right)+ \frac{1}{t^r} x(t)=0.
		\end{equation}
	Then, we have fast convergence of the values, fast convergence of the gradients towards zero, 
	and  strong convergence of the trajectory to the minimum norm 	solution, with the following rates: 
	
	\smallskip
			
$\bullet$ \; $f(x(t))-\min_{\cH} f= \mathcal O \left( \displaystyle\frac{1}{t^{r} }   \right)$  \;  as $ t \to +\infty$ ;
 
$\bullet$ \; $\displaystyle{\int_{t_0}^{+\infty}t^{\frac{3r -2}{2}}\Vert\nabla f(x(t))\Vert^2 dt<+\infty }$;

$\bullet$ \; $ \|x(t) -x_{\varepsilon(t)}\|^2=\mathcal{O}\left(\displaystyle{\dfrac{1}{ t^{\frac{2-r}2}}}\right)$ \;  as $ t \to +\infty$,

where $\varepsilon \mapsto 
x_{\varepsilon} = \argmin_{\xi \in \cH}\left\lbrace f(\xi) + \frac{\varepsilon}{2} \|\xi\|^2 \right\rbrace$ is the viscosity curve, which is known to converge to the minimum norm 	solution.
\end{theorem}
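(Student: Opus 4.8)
The plan is to read \eqref{particular-intro} as a heavy-ball system for the time-dependent strongly convex function $\Phi_t:=f+\frac{\varepsilon(t)}{2}\norm{\cdot}^2$, with $\varepsilon(t)=t^{-r}$, and to carry out a Lyapunov analysis \emph{anchored on the moving viscosity curve} $x_{\varepsilon(t)}$ rather than on a fixed minimizer. First I would record the classical properties of $\varepsilon\mapsto x_\varepsilon$: it is the unique minimizer of $\Phi_t$, characterized by $\n f(x_{\varepsilon(t)})+\varepsilon(t)\,x_{\varepsilon(t)}=0$; it is of class $\mathcal C^1$ in $t$, with $\norm{x_{\varepsilon(t)}}$ nondecreasing towards $\norm{x^*}$ and the a priori bound $\norm{\frac{d}{dt}x_{\varepsilon(t)}}\le \frac{|\dot\varepsilon(t)|}{\varepsilon(t)}\norm{x_{\varepsilon(t)}}$. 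For $\varepsilon(t)=t^{-r}$ this yields $\norm{\frac{d}{dt}x_{\varepsilon(t)}}=\mathcal O(1/t)$ and $|\dot\varepsilon(t)|=\mathcal O(t^{-r-1})$; these two quantities measure the drift of the anchor and will ultimately fix the polynomial rates.

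Next I would exploit the \emph{explicit} Hessian term through $\beta\n^2 f(x(t))\dot x(t)=\beta\frac{d}{dt}\n f(x(t))$, which lets me absorb it into a velocity-correction vector. Writing $v(t):=\lambda(t)\pa{x(t)-x_{\varepsilon(t)}}+\dot x(t)+\beta\n f(x(t))$ with $\lambda(t)\asymp t^{-r/2}$ tuned to the damping, I would propose the Lyapunov function
\[
\mathcal E(t)=\Phi_t(x(t))-\Phi_t(x_{\varepsilon(t)})+\tfrac12\norm{v(t)}^2 .
\]
Differentiating, substituting $\ddot x$ from \eqref{particular-intro}, and using the $\varepsilon(t)$-strong convexity of $\Phi_t$ in the form $\Phi_t(x_{\varepsilon(t)})\ge \Phi_t(x(t))+\dotp{\n\Phi_t(x(t))}{x_{\varepsilon(t)}-x(t)}+\frac{\varepsilon(t)}{2}\norm{x(t)-x_{\varepsilon(t)}}^2$, the target is a differential inequality
\[
\dot{\mathcal E}(t)\le -\frac{c}{t^{r/2}}\,\mathcal E(t)-w(t)\norm{\n f(x(t))}^2+R(t),
\]
in which a weighted gradient dissipation appears; matching its weight to the claimed $t^{(3r-2)/2}$ may require inserting a time multiplier into $\mathcal E$, while $R(t)$ collects the drift contributions from $\dot\varepsilon(t)$ and $\frac{d}{dt}x_{\varepsilon(t)}$. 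The hypothesis $\delta>2$ is what makes the quadratic form produced by $\dot{\mathcal E}$ dissipative (it plays the role of over-critical damping $\delta\,t^{-r/2}>2\sqrt{\varepsilon(t)}$ for the modulus $\varepsilon(t)$), and $\beta>0$ is what shapes $v(t)$ so that the $\n f$-term can be extracted with the correct sign.

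The decisive point of the integration is that, since $r<2$, the homogeneous factor $\exp\!\pa{-c\int_{t_0}^{t}s^{-r/2}ds}$ decays faster than any power of $t$. Hence the genuine polynomial rates are dictated \emph{not} by the dissipation but by the forcing $R(t)$: I would bound $R(t)$ using $\norm{\frac{d}{dt}x_{\varepsilon(t)}}=\mathcal O(1/t)$ and $|\dot\varepsilon(t)|=\mathcal O(t^{-r-1})$, splitting the indefinite cross terms by Young's inequality so that a fraction of $\frac{\varepsilon(t)}{2}\norm{x(t)-x_{\varepsilon(t)}}^2$ is reabsorbed by the strong-convexity dissipation, thereby obtaining the sharp decay of the regularized gap $\Phi_t(x(t))-\Phi_t(x_{\varepsilon(t)})$. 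From there the three conclusions follow by elementary post-processing: the value rate $\mathcal O(1/t^r)$ comes from adding the regularization bias $\Phi_t(x_{\varepsilon(t)})-\min_{\cH}f=\mathcal O(\varepsilon(t))$ (a standard viscosity estimate) to the smaller regularized gap; the gradient estimate comes from integrating the $-w(t)\norm{\n f}^2$ term once $\mathcal E$ is known to be bounded; and the strong-convergence rate $\norm{x(t)-x_{\varepsilon(t)}}^2=\mathcal O(1/t^{(2-r)/2})$ comes from dividing the regularized gap by $\varepsilon(t)$ via $\frac{\varepsilon(t)}{2}\norm{x(t)-x_{\varepsilon(t)}}^2\le \Phi_t(x(t))-\Phi_t(x_{\varepsilon(t)})$.

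I expect the main obstacle to be exactly this bookkeeping of $R(t)$ in the regime where the dissipation is superpolynomial. One cannot lean on the exponential relaxation to swallow the anchor drift; instead every cross term involving $\frac{d}{dt}x_{\varepsilon(t)}$ must be split with a weight carefully matched to $\lambda(t)\asymp t^{-r/2}$ and to $\varepsilon(t)=t^{-r}$, so that the reabsorbed part stays dominated by the strong-convexity dissipation while the residual forcing integrates to the stated rates uniformly over the whole range $0<r<2$. Getting these exponents to close simultaneously for the value rate, the gradient weight $t^{(3r-2)/2}$, and the strong-convergence rate $t^{-(2-r)/2}$ --- under the single structural hypothesis $\delta>2$ --- is the delicate part; the existence, uniqueness, and global extension of the trajectory, by contrast, follow from the Cauchy--Lipschitz argument already granted in the introduction together with the boundedness that $\mathcal E$ provides.
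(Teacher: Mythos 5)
First, a point of reference: the paper itself does not prove Theorem \ref{thm:model-intro}. It is quoted in the historical section as a known result, imported from \cite{ABCR}, \cite{ABCR2}, \cite{AL}, precisely to serve as a benchmark; the whole point of the present paper is to \emph{avoid} the heavy Lyapunov analysis of the second-order Tikhonov-regularized dynamic by instead analyzing the first-order flow \eqref{damped-id} and applying time scaling and averaging. So your proposal cannot match ``the paper's proof''; what it can be measured against is the Lyapunov method of the cited works, and in spirit it does mirror that method: anchoring the energy at the moving minimizer $x_{\varepsilon(t)}$ of $\varphi_t=f+\frac{\varepsilon(t)}{2}\|\cdot\|^2$, using the $\varepsilon(t)$-strong convexity of $\varphi_t$, absorbing the explicit Hessian term via $\beta\nabla^2 f(x(t))\dot x(t)=\beta\frac{d}{dt}\nabla f(x(t))$ into a corrected velocity, and exploiting the drift bounds of Lemmas \ref{lem-basic-c}--\ref{lem1}. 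Your dimensional bookkeeping is also consistent: with $\varepsilon(t)=t^{-r}$, balancing a forcing of order $t^{-1-r}$ (from $\dot\varepsilon\|x_{\varepsilon(t)}\|^2$ and the anchor drift $\mathcal O(1/t)$) against a dissipation of order $t^{-r/2}$ gives a regularized gap of order $t^{-(r+2)/2}$, which upon dividing by $\varepsilon(t)$ reproduces $\|x(t)-x_{\varepsilon(t)}\|^2=\mathcal O\left(t^{-(2-r)/2}\right)$ and, after adding the bias $\frac{\varepsilon(t)}{2}\|x^*\|^2$, the value rate $\mathcal O(t^{-r})$.

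Nevertheless, as a proof your text has a genuine gap: the central differential inequality $\dot{\mathcal E}(t)\le -c\,t^{-r/2}\mathcal E(t)-w(t)\|\nabla f(x(t))\|^2+R(t)$ is announced as a ``target,'' never derived. Everything that actually constitutes the proof is deferred: the choice of $\lambda(t)\asymp t^{-r/2}$ is not pinned down; the differentiation of $\mathcal E$ (which produces cross terms in $\lambda\dot x$, $\lambda\dot\lambda\|x-x_{\varepsilon(t)}\|^2$, $\beta\lambda\dotp{\nabla f(x)}{x-x_{\varepsilon(t)}}$, and the anchor-drift terms) is not performed; the verification that $\delta>2$ makes the resulting quadratic form dissipative is asserted by analogy with critical damping rather than checked; and the emergence of the specific weight $t^{(3r-2)/2}$ in the gradient integral --- the one conclusion that visibly depends on $\beta>0$ --- is left entirely unexplained. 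You yourself flag the exponent-matching ``under the single structural hypothesis $\delta>2$'' as the delicate part; in this problem that matching \emph{is} the proof, since (as you correctly observe) the superpolynomial relaxation factor makes the rates hinge exclusively on the forcing $R(t)$. Two smaller inaccuracies should also be fixed: the viscosity curve is only locally Lipschitz and almost everywhere differentiable (Lemma \ref{lem-basic-cc}), not of class $\mathcal C^1$, so the energy computation must be justified for absolutely continuous curves as in the proof of Theorem \ref{strong-conv-thm-b}; and the value rate requires the one-line comparison $f(x(t))-\min_{\cH}f\le\left(\varphi_t(x(t))-\varphi_t(x_{\varepsilon(t)})\right)+\frac{\varepsilon(t)}{2}\|x^*\|^2$ of Lemma \ref{lem-basic-b}, which your ``standard viscosity estimate'' invokes but does not state. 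In summary: correct strategy, consistent exponents, but the argument is a roadmap of the proofs in \cite{ABCR2}, \cite{AL} rather than a proof.
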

As a major result our approach allows to improve this result by treating the limiting case $r=2$, which corresponds to the viscous damping coefficient of the Su, Boyd and
Cand\`es  dynamic version of the Nesterov accelerated gradient method.

\section{Steepest descent method with vanishing Tikhonov regularization. }
This section is devoted to  
the asymptotic analysis as $t \to +\infty$ of the continuous steepest descent with vanishing Tikhonov regularization
\begin{equation}\label{damped-id-2}
 \dot{x}(t) +  \nabla f (x(t)) + \varepsilon (t) x(t) =0.
\end{equation}
As explained above, our results will be direct consequences of the convergence rates established for this system. We will just apply the time scaling and averaging method to obtain an inertial system with fast convergence properties.

\subsection{Classical results for the continuous steepest descent}

Let us recall some classic facts concerning the asymptotic behavior of the  continuous steepest descent, \ie with $\varepsilon (t)=0$,
see \cite{ABotNguyen}  for a proof of the following theorem. 

\medskip

\begin{theorem}\label{SD_pert_thm}
Suppose that  $f \colon \cH \to \R$ satisfies $(\mathcal A)$.
Let $x \colon \left[ t_{0} , + \infty \right[ \to \cH$ be a solution trajectory of 

\begin{equation}\label{pert SD_0}\tag{SD}	
\dot{x}(t) + \nabla f( x(t)) = 0.
\end{equation}

Then the following properties are satisfied:

\noindent i)  
 (convergence  of  gradients and velocities)\quad	
  $\left\lVert \nabla f \left( x \left( t \right) \right) \right\rVert = o \left( \dfrac{1}{t} \right) \mbox{ and }\,  \left\lVert  \dot{x}(t)\right\rVert = o \left( \dfrac{1}{t} \right)   \mbox{ as  }\,t \to + \infty.$

\noindent ii) (integral estimate of gradients,  velocities)
$
	\displaystyle{\int_{t_0}^{+\infty} }t \left\lVert \nabla f \left( x \left( t \right) \right) \right\rVert ^{2}  dt < +\infty \, \mbox{ and }\, \displaystyle{\int_{t_0}^{+\infty} }t \left\lVert \dot{x}(t) \right\rVert ^{2}  dt < \infty.
$

\noindent iii) (convergence of values)\quad
$
	f \left( x \left( t \right) \right) -\min_{\cH} f = o \left( \dfrac{1}{t} \right) \textrm{ as } t \to + \infty .
$

\noindent  iv) The solution trajectory $x(t)$ converges weakly as $t \to +\infty$, and its limit belongs to $S=\argmin f$.
\end{theorem}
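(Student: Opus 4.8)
The plan is to run a Lyapunov analysis built on two elementary energy identities, and then to sharpen the resulting $\mathcal O$-rates into $o$-rates by a monotonicity argument. First I would record the two facts that drive everything. Differentiating $t \mapsto f(x(t))$ along $\dot x = -\nabla f(x)$ gives $\frac{d}{dt} f(x(t)) = -\norm{\nabla f(x(t))}^2 = -\norm{\dot x(t)}^2 \le 0$, so $f(x(\cdot))$ is nonincreasing and, after integration, $\int_{t_0}^{+\infty}\norm{\nabla f(x(t))}^2\,dt \le f(x(t_0)) - \min_\cH f$. Second, fixing any $z \in S$, the anchor $h(t) = \frac{1}{2}\norm{x(t)-z}^2$ satisfies $\dot h(t) = -\dotp{\nabla f(x(t))}{x(t)-z} \le -\pa{f(x(t)) - \min_\cH f} \le 0$ by convexity; hence $\norm{x(t)-z}$ is nonincreasing, the trajectory is bounded, and $\lim_{t\to+\infty}\norm{x(t)-z}$ exists for every $z \in S$. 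This last fact is the first hypothesis of Opial's lemma, which I reserve for iv).

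For ii) and the $\mathcal O$-version of iii), I would introduce the energy $E(t) = t\pa{f(x(t)) - \min_\cH f} + \frac{1}{2}\norm{x(t)-z}^2$. Combining the two computations above, a direct calculation yields $\dot E(t) \le -t\norm{\nabla f(x(t))}^2 \le 0$. Monotonicity of $E$ then gives at once the integral estimate $\int_{t_0}^{+\infty} t\norm{\nabla f(x(t))}^2\,dt \le E(t_0) < +\infty$, which is exactly ii) since $\norm{\dot x} = \norm{\nabla f(x)}$, together with the preliminary bound $f(x(t)) - \min_\cH f = \mathcal O(1/t)$.

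The crucial extra ingredient for i) is that $t \mapsto \norm{\dot x(t)}$ is nonincreasing. Formally this follows by differentiating the governing equation into $\ddot x = -\nabla^2 f(x)\dot x$, whence $\frac{d}{dt}\frac{1}{2}\norm{\dot x}^2 = -\dotp{\nabla^2 f(x)\dot x}{\dot x} \le 0$ by convexity; in the mere $C^1$ setting of $(\mathcal A)$ this monotonicity must instead be justified through the contraction and regularizing properties of the semigroup generated by the maximal monotone operator $\nabla f$, and I expect this to be the one genuinely delicate point. Granting it, $g(t) := \norm{\dot x(t)}^2$ is nonincreasing with $t\,g \in L^1$, and the elementary lemma ``$g$ nonincreasing and $t\,g \in L^1$ imply $t^2 g(t) \to 0$'' (obtained from $\int_{t/2}^t s\,g(s)\,ds \ge \frac{3}{8}t^2 g(t)$) yields $\norm{\dot x(t)} = \norm{\nabla f(x(t))} = o(1/t)$, proving i). The sharp form of iii) then drops out of convexity and boundedness: $f(x(t)) - \min_\cH f \le \dotp{\nabla f(x(t))}{x(t)-x^*} \le \norm{\nabla f(x(t))}\,\norm{x(t)-x^*} = o(1/t)\cdot \mathcal O(1)$. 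Finally, for iv) I would invoke Opial's lemma: its first hypothesis was secured above, while for the second I note that $f(x(t)) \to \min_\cH f$ by iii), so weak lower semicontinuity of the convex continuous $f$ forces every weak sequential cluster point of $x(t)$ into $S = \argmin f$; Opial then delivers weak convergence of the whole trajectory to a single point of $S$.
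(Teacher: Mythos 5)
Your proposal is correct and follows essentially the same route as the proof the paper points to in \cite{ABotNguyen} (the paper itself only cites that reference for this theorem): the anchored Lyapunov energy $t\left(f(x(t))-\min_{\cH}f\right)+\frac{1}{2}\|x(t)-z\|^2$, the nonincreasing speed $t\mapsto\|\dot{x}(t)\|$ combined with $t\|\dot{x}(t)\|^2\in L^1$ to upgrade $\mathcal O$ to $o$, and Opial's lemma for iv). The one point you flag as delicate is in fact elementary: for $h>0$, $\frac{d}{dt}\frac{1}{2}\|x(t+h)-x(t)\|^2=-\langle \nabla f(x(t+h))-\nabla f(x(t)),\,x(t+h)-x(t)\rangle\le 0$ by monotonicity of $\nabla f$, so $t\mapsto\|x(t+h)-x(t)\|$ is nonincreasing, and dividing by $h$ and letting $h\downarrow 0$ gives the monotonicity of $\|\dot{x}(t)\|$ without any semigroup machinery.
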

\subsection{Classical results concerning the viscosity curve}

For any $\varepsilon >0$ let us define
\begin{equation}\label{33}
x_{\varepsilon} = \argmin_{\xi \in \cH}\left\lbrace f(\xi) + \frac{\varepsilon}{2} \|\xi\|^2 \right\rbrace.
\end{equation}
Equivalently
\begin{equation}\label{333}
\nabla f(x_{\varepsilon} ) + \varepsilon x_{\varepsilon} =0.
\end{equation}
The mapping $\varepsilon \mapsto x_{\varepsilon}$ is called the viscosity
curve. Its geometric properties will play a key role in our analysis.
Let us recall some of its classical properties:

\begin{lemma}\label{lem-basic-c}{\rm (\cite{Att2})}
Let $x^{*}=\mbox{\rm proj}_{\argmin f} 0$.
We have
 \begin{itemize}
 	\item[$(i)$] \; $\forall \varepsilon >0 \;\; \;  \|x_{\varepsilon}\|\leq \|x^{*}\|$ ;
\label{2a}  \medskip
	\item[$(ii)$] \; $\lim_{\varepsilon \rightarrow 0}\|x_{\varepsilon}-x^{*}\|=0 $. \label{2b}
 \end{itemize}
 \end{lemma}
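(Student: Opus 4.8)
The plan is to derive both claims directly from the optimality characterization \eqref{333} of the viscosity curve and from the strong convexity of the regularized objective, with no appeal to any dynamics. Throughout I use that $S=\argmin_\cH f$ is closed and convex, so that $x^{*}=\mathrm{proj}_S 0$ is well defined and is the \emph{unique} element of least norm in $S$, and that $x_\varepsilon$ is uniquely defined for each $\varepsilon>0$ by strong convexity of $\xi\mapsto f(\xi)+\tfrac{\varepsilon}{2}\|\xi\|^2$.

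For $(i)$, I would start from the very definition of $x_\varepsilon$ as a minimizer and compare its value with the value at $x^{*}\in S$:
\[
f(x_\varepsilon) + \frac{\varepsilon}{2}\|x_\varepsilon\|^2 \le f(x^{*}) + \frac{\varepsilon}{2}\|x^{*}\|^2 .
\]
Since $x^{*}\in\argmin_\cH f$ we have $f(x^{*})\le f(x_\varepsilon)$, so dropping the nonpositive term $f(x^{*})-f(x_\varepsilon)$ and dividing by $\varepsilon/2>0$ leaves $\|x_\varepsilon\|^2\le\|x^{*}\|^2$. This is exactly $(i)$ and shows at once that the viscosity curve stays in the ball of radius $\|x^{*}\|$.

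For $(ii)$, the uniform bound from $(i)$ lets me extract, along any sequence $\varepsilon_n\to 0$, a weakly convergent subsequence $x_{\varepsilon_{n_k}}\rightharpoonup \bar x$, and I would identify $\bar x$ in two steps. First, rewriting the comparison inequality as $f(x_\varepsilon)-f(x^{*})\le \tfrac{\varepsilon}{2}\big(\|x^{*}\|^2-\|x_\varepsilon\|^2\big)\le \tfrac{\varepsilon}{2}\|x^{*}\|^2$ and letting $\varepsilon\to 0$ gives $\limsup_\varepsilon f(x_\varepsilon)\le \min_\cH f$; weak lower semicontinuity of the convex continuous function $f$ then forces $f(\bar x)\le\min_\cH f$, hence $\bar x\in S$. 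Second, weak lower semicontinuity of the norm together with $(i)$ yields $\|\bar x\|\le \liminf_k\|x_{\varepsilon_{n_k}}\|\le \|x^{*}\|$; since $\bar x\in S$ and $x^{*}$ is the unique least-norm element of $S$, this forces $\bar x=x^{*}$. As the sequence was arbitrary, the whole curve satisfies $x_\varepsilon\rightharpoonup x^{*}$ as $\varepsilon\to 0$.

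It remains to upgrade weak convergence to strong convergence, which I regard as the only genuinely delicate point. Here I would invoke the Hilbert (Radon--Riesz) structure: from $\|\bar x\|\le\liminf_\varepsilon\|x_\varepsilon\|$ and $\|x_\varepsilon\|\le\|x^{*}\|=\|\bar x\|$ I obtain $\|x_\varepsilon\|\to\|x^{*}\|$, and then expanding
\[
\|x_\varepsilon-x^{*}\|^2 = \|x_\varepsilon\|^2 - 2\dotp{x_\varepsilon}{x^{*}} + \|x^{*}\|^2
\]
and passing to the limit, using $\dotp{x_\varepsilon}{x^{*}}\to\|x^{*}\|^2$ from weak convergence and $\|x_\varepsilon\|^2\to\|x^{*}\|^2$, gives $\|x_\varepsilon-x^{*}\|\to 0$. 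The main obstacle is precisely this last step: weak convergence alone does not control the norms, so the argument must first secure both $x_\varepsilon\rightharpoonup x^{*}$ \emph{and} $\|x_\varepsilon\|\to\|x^{*}\|$ before the squared-norm expansion can close the proof.
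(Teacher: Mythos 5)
Your proof is correct. The paper does not give its own proof of this lemma --- it is quoted as a classical result from \cite{Att2} --- and your argument (comparison of the regularized values at $x_\varepsilon$ and $x^{*}$ for $(i)$; then boundedness, identification of every weak subsequential limit via weak lower semicontinuity of $f$ and of the norm together with uniqueness of the least-norm element of $S$, upgraded to strong convergence through $\|x_\varepsilon\|\to\|x^{*}\|$ and the Hilbertian expansion of $\|x_\varepsilon-x^{*}\|^2$) is precisely the standard argument of that reference, with the one delicate point (weak convergence alone does not suffice) correctly flagged and handled.
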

Let us now recall the differentiability properties of the viscosity curve.
\begin{lemma}\label{lem-basic-cc}{\rm (\cite{Att2}, \cite{AttCom})}
The function $\varepsilon \mapsto x_{\varepsilon}$ is 
 Lipschitz continuous on the compact intervals of $]0, +\infty[$, hence almost everywhere differentiable, and the following inequality holds:
 \begin{equation}\label{44}
\| \frac{d}{d\varepsilon}\left( x_{\varepsilon} \right)\| \leq \frac{\|x^{*}\|}{\varepsilon}.
\end{equation} 
 \end{lemma}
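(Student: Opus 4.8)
The plan is to argue directly from the characterization \eqref{333} of the viscosity curve, namely $\nabla f(x_{\varepsilon}) + \varepsilon x_{\varepsilon} = 0$, without appealing to any implicit function theorem. First I would fix two parameters $\varepsilon_1, \varepsilon_2 > 0$ and write the two optimality relations $\nabla f(x_{\varepsilon_1}) + \varepsilon_1 x_{\varepsilon_1} = 0$ and $\nabla f(x_{\varepsilon_2}) + \varepsilon_2 x_{\varepsilon_2} = 0$. Subtracting them and taking the scalar product with $x_{\varepsilon_1} - x_{\varepsilon_2}$, the term $\langle \nabla f(x_{\varepsilon_1}) - \nabla f(x_{\varepsilon_2}),\, x_{\varepsilon_1} - x_{\varepsilon_2}\rangle$ is nonnegative by monotonicity of $\nabla f$ (which follows from convexity of $f$) and may be dropped, leaving the inequality $\langle \varepsilon_1 x_{\varepsilon_1} - \varepsilon_2 x_{\varepsilon_2},\, x_{\varepsilon_1} - x_{\varepsilon_2}\rangle \leq 0$.

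The second step is to extract a Lipschitz estimate from this inequality. Writing $\varepsilon_1 x_{\varepsilon_1} - \varepsilon_2 x_{\varepsilon_2} = \varepsilon_1 (x_{\varepsilon_1} - x_{\varepsilon_2}) + (\varepsilon_1 - \varepsilon_2) x_{\varepsilon_2}$ and using Cauchy--Schwarz, one obtains
\begin{equation*}
\varepsilon_1 \|x_{\varepsilon_1} - x_{\varepsilon_2}\|^2 \leq |\varepsilon_1 - \varepsilon_2|\, \|x_{\varepsilon_2}\|\, \|x_{\varepsilon_1} - x_{\varepsilon_2}\|.
\end{equation*}
Invoking Lemma \ref{lem-basic-c}$(i)$ to bound $\|x_{\varepsilon_2}\| \leq \|x^{*}\|$ and dividing by $\|x_{\varepsilon_1} - x_{\varepsilon_2}\|$ (the inequality being trivial when the two points coincide) yields $\|x_{\varepsilon_1} - x_{\varepsilon_2}\| \leq \frac{\|x^{*}\|}{\varepsilon_1}\,|\varepsilon_1 - \varepsilon_2|$. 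On any compact interval $[a,b] \subset \,]0,+\infty[$ the factor $1/\varepsilon_1$ is controlled by $1/a$, which gives Lipschitz continuity there with constant $\|x^{*}\|/a$.

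Finally, since a Lipschitz map from a real interval into the Hilbert space $\cH$ is differentiable almost everywhere (Hilbert spaces enjoy the Radon--Nikodym property, so the vector-valued Rademacher theorem applies), the derivative $\frac{d}{d\varepsilon} x_\varepsilon$ exists for a.e.\ $\varepsilon > 0$. At such a point I would set $\varepsilon_1 = \varepsilon$ and $\varepsilon_2 = \varepsilon + h$ in the estimate just obtained, divide by $|h|$, and let $h \to 0$ to reach $\|\frac{d}{d\varepsilon} x_\varepsilon\| \leq \frac{\|x^{*}\|}{\varepsilon}$, which is exactly \eqref{44}.

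The computation is elementary once monotonicity is exploited, so I expect no serious obstacle in the estimates themselves. The only point requiring a little care is the almost-everywhere differentiability of the vector-valued Lipschitz curve, which I would settle by citing the Radon--Nikodym property of Hilbert spaces rather than reproving it; the mild asymmetry of the bound in $\varepsilon_1$ versus $\varepsilon_2$ is harmless, since we only claim the statement on intervals bounded away from the origin.
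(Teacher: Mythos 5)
Your proof is correct, and it follows essentially the same route the paper takes: the paper states Lemma~\ref{lem-basic-cc} with citations to \cite{Att2}, \cite{AttCom} rather than proving it, but its proof of the time-parametrized analogue in Lemma~\ref{lem1}$(ii)$ uses exactly your argument — subtract the two optimality conditions $\nabla f(x_{\varepsilon})+\varepsilon x_{\varepsilon}=0$, exploit monotonicity of $\nabla f$, and pass to the limit in the difference quotient — with your only additions being the (correct) appeal to Lemma~\ref{lem-basic-c}$(i)$ to replace $\|x_{\varepsilon}\|$ by $\|x^{*}\|$ and the explicit invocation of the vector-valued Rademacher theorem via the Radon--Nikodym property, which is sound since $\cH$ is reflexive. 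No gaps.
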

This is a sharp estimation, since, as shown by Torralba  in \cite{Torralba}, there is a convex smooth function $f$ such that the viscosity curve has infinite length.

\subsection{The steepest descent with vanishing Tikhonov regularization. Preparatory results}
\label{sec:Lyap}

Let us now introduce a vanishing Tikhonov regularization in the continuous steepest descent
\begin{equation}\label{damped-id-b}
 \dot{x}(t) +  \nabla f (x(t)) + \varepsilon (t) x(t) =0.
\end{equation}
We recall that  $\varepsilon : [t_0 , +\infty [ \to \mathbb R^+  $ is a nonincreasing
function of class $\mathcal{C}^{1}$, such that $\lim_{t\rightarrow +\infty} \varepsilon(t) = 0$.
The function
\begin{equation}\label{def:phi}
\varphi_t: \cH \to \mathbb R, \quad
\varphi_t (x) := f(x) + \frac{\varepsilon(t)}{2} \|x\|^2
\end{equation}
will play a central role in the Lyapunov analysis of \eqref{damped-id-b}  via its strong convexity property.
Thus, it is convenient to reformulate \eqref{damped-id-b} with the help of the function $\varphi_t$, which gives, equivalently
\begin{equation}\label{1sans}
\dot{x}(t) + \nabla {\varphi}_{t}(x(t))=0.
\end{equation}
Let us introduce the real-valued function $t \in [t_0, +\infty[ \mapsto E(t) \in \R^+$, a key ingredient of our Lyapunov analysis. It is defined by
\begin{equation}\label{3}
E(t)\eqdef 
\left(\varphi_{t}(x(t))-\varphi_{t}(x_{\varepsilon(t)})\right) +\dfrac{\varepsilon(t)}{2}\|x(t)-x_{\varepsilon(t)}\|^{2}
\end{equation}
where $\varphi_{t}$ has been defined in (\ref{def:phi}),   
Hence $x_{\varepsilon(t)} = \argmin_{\cH}{\varphi}_{t}$.
We need the following lemmas for the proof of the main theorem of this paper.

\begin{lemma}\label{lem-basic-b}
	Let   $x(\cdot): [t_0, + \infty[ \to \cH$ be a solution trajectory of  the continuous steepest descent  with vanishing Tikhonov regularization \eqref{damped-id-b}. Let $t \in [t_0, +\infty[ \mapsto E(t) \in \R^+$ be the energy function defined in \eqref{3}. Then, the following estimates are satisfied:   for any $t\geq t_0$, 
	\begin{eqnarray}
	&&f(x(t))-\min_{\mathcal H}f	
	\leq  E(t)+\dfrac{\varepsilon(t)}{2}\|x^*\|^{2}; \label{keybb-00}\\
	&&\|x(t) - x_{\varepsilon(t)}\|^2  \leq \frac{E(t)}{\varepsilon(t)} \label{est:basic1}.
	\end{eqnarray}
	Therefore, $x(t)$ converges strongly to $x^*$
	as soon as 
	$
	\lim_{t\to +\infty} \displaystyle{\frac{E(t)}{\varepsilon(t)}}=0.
	$
\end{lemma}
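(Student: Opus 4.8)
The plan is to exploit the $\varepsilon(t)$-strong convexity of the regularized function $\varphi_t$ defined in \eqref{def:phi}, for which $x_{\varepsilon(t)}$ is by construction the unique minimizer, and then to close with the convergence of the viscosity curve recalled in Lemma~\ref{lem-basic-c}. The single computational ingredient I would establish first is the quadratic growth estimate valid for any $\varepsilon(t)$-strongly convex function at its minimizer, namely
\[
\varphi_t(x(t)) - \varphi_t(x_{\varepsilon(t)}) \geq \frac{\varepsilon(t)}{2}\norm{x(t) - x_{\varepsilon(t)}}^2 .
\]
Here strong convexity comes for free: $f$ is convex and $x \mapsto \frac{\varepsilon(t)}{2}\norm{x}^2$ is $\varepsilon(t)$-strongly convex, so $\varphi_t$ inherits modulus $\varepsilon(t)$.

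To obtain \eqref{est:basic1}, I would simply insert the displayed bound into the definition \eqref{3} of $E(t)$: the two nonnegative quadratic terms then combine to give $E(t) \geq \varepsilon(t)\norm{x(t) - x_{\varepsilon(t)}}^2$, and dividing by $\varepsilon(t) > 0$ yields the claim.

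To obtain \eqref{keybb-00}, I would rearrange \eqref{3} as $\varphi_t(x(t)) = E(t) + \varphi_t(x_{\varepsilon(t)}) - \frac{\varepsilon(t)}{2}\norm{x(t)-x_{\varepsilon(t)}}^2$. Discarding the nonpositive last term and using $f(x(t)) \leq \varphi_t(x(t))$ (the regularization term being nonnegative) gives $f(x(t)) \leq E(t) + \varphi_t(x_{\varepsilon(t)})$. The decisive step is to control $\varphi_t(x_{\varepsilon(t)}) = \min_{\cH}\varphi_t$ from above by testing against the minimum norm minimizer $x^*$:
\[
\varphi_t(x_{\varepsilon(t)}) \leq \varphi_t(x^*) = f(x^*) + \frac{\varepsilon(t)}{2}\norm{x^*}^2 = \min_{\cH} f + \frac{\varepsilon(t)}{2}\norm{x^*}^2 ,
\]
where $f(x^*) = \min_{\cH} f$ since $x^* \in S$. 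Combining the two inequalities delivers \eqref{keybb-00}.

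Finally, the strong convergence assertion follows at once: assuming $E(t)/\varepsilon(t) \to 0$, estimate \eqref{est:basic1} forces $\norm{x(t) - x_{\varepsilon(t)}} \to 0$, while $\varepsilon(t) \to 0$ together with Lemma~\ref{lem-basic-c}$(ii)$ gives $\norm{x_{\varepsilon(t)} - x^*} \to 0$; the triangle inequality then yields $x(t) \to x^*$. As for difficulty, there is no genuine obstacle here — the proof is a short chain of inequalities. The only points requiring care are bookkeeping the direction of the strong-convexity bound so that it is used as a lower bound in both displays, and choosing $x^*$ (rather than an arbitrary minimizer) as the test point, which is precisely what makes the regularization remainder equal to $\frac{\varepsilon(t)}{2}\norm{x^*}^2$ and renders \eqref{keybb-00} sharp.
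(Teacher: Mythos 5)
Your proposal is correct and follows essentially the same route as the paper: both parts rest on the $\varepsilon(t)$-strong convexity of $\varphi_t$ (quadratic growth at its minimizer $x_{\varepsilon(t)}$ for \eqref{est:basic1}), on the minimality test $\varphi_t(x_{\varepsilon(t)})\leq\varphi_t(x^*)$ combined with dropping the nonnegative terms $\frac{\varepsilon(t)}{2}\|x(t)\|^2$ and $\frac{\varepsilon(t)}{2}\|x(t)-x_{\varepsilon(t)}\|^2$ for \eqref{keybb-00}, and on Lemma~\ref{lem-basic-c}$(ii)$ plus the triangle inequality for the strong convergence. The only difference is cosmetic bookkeeping: you rearrange the definition of $E(t)$ while the paper expands $f(x(t))-\min_{\cH}f$ directly, but the chain of inequalities is the same.
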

 \begin{proof}
 
 $(i)$
 According to the definition of $ \varphi_{t}$,  we have 
 \begin{equation*}
\begin{array}{lll}
f(x(t))-\min_{\mathcal H}f	& = &  \varphi_{t}(x(t))-\varphi_{t}(x^*)+\dfrac{\varepsilon(t)}{2}\left(\|x^*\|^{2}-\|x(t)\|^{2}\right)  \\ 
	& = & \left[\varphi_{t}(x(t))-\varphi_{t}(x_{\varepsilon(t)})\right]+\left[\underbrace{\varphi_{t}(x_{\varepsilon(t)})-\varphi_{t}(x^*)}_{\leq 0}\right]+\dfrac{\varepsilon(t)}{2}\left(\|x^*\|^{2}-\|x(t)\|^{2}\right)\\
	& \leq  &\varphi_{t}(x(t))-\varphi_{t}(x_{\varepsilon(t)})+\dfrac{\varepsilon(t)}{2}\|x^*\|^{2}.
\end{array}
\end{equation*}
By definition of $E(t)$ we have 
 \begin{equation}\label{E_phi}
  \varphi_{t}(x(t))-\varphi_{t}(x_{\varepsilon(t)}) \leq E(t)
  \end{equation}
 which, combined with the above inequality, gives \eqref{keybb-00}.

\smallskip

$(ii)$
By the strong convexity of $\varphi_{t}$, and 
$x_{\varepsilon(t)}:= \argmin_{\cH}\varphi_{t}$, we have
$$
\varphi_{t}(x(t))-\varphi_{t}(x_{\varepsilon(t)}) \geq \frac{\varepsilon (t)}{2}  \|x(t) - x_{\varepsilon(t)}\|^2 .
$$
Returning to the definition of $E(t)$, we get
$$
E(t)-\frac{\varepsilon (t)}{2}  \|x(t) - x_{\varepsilon(t)}\|^2 \geq \frac{\varepsilon (t)}{2}  \|x(t) - x_{\varepsilon(t)}\|^2 ,
$$
which gives \eqref{est:basic1}.\qed
\end{proof}

\begin{lemma}\label{lem1}
The following properties are satisfied:
 \begin{itemize}
 	\item[$(i)$] For each $t \geq t_0$, \; 
 	$\dfrac{d}{dt}\left(\varphi_{t}(x_{\varepsilon(t)})\right)=\frac{1}{2}\dot{\varepsilon}(t)\|x_{\varepsilon(t)}\|^{2}$.
 	
 	\smallskip
 	
 	\item[$(ii)$] The function $t\mapsto x_{\varepsilon(t)}$ is 
 Lipschitz continuous on the compact intervals of $]t_0, +\infty[$, hence almost everywhere differentiable, and the following inequality holds:  for almost every $t \geq t_0$
 	 $$\left\|\dfrac{d}{dt}\left(x_{\varepsilon(t)}\right)\right\|^{2} \leq -\dfrac{\dot{\varepsilon}(t)}{\varepsilon(t)} \left\langle \dfrac{d}{dt}\left(x_{\varepsilon(t)}\right), x_{\varepsilon(t)}\right\rangle.$$
 \end{itemize}
 	Therefore, for almost every $t \geq t_0$
 	$$\left\|\dfrac{d}{dt}\left(x_{\varepsilon(t)}\right)\right\|\leq -\dfrac{\dot{\varepsilon}(t)}{\varepsilon(t)} \|x_{\varepsilon(t)}\|. $$ 
\end{lemma}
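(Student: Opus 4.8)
The plan is to treat the three assertions in order, relying throughout on the first-order optimality condition \eqref{333}, which for the point $x_{\varepsilon(t)}$ reads $\nabla f(x_{\varepsilon(t)}) + \varepsilon(t)\, x_{\varepsilon(t)} = 0$, together with the regularity of the viscosity curve recorded in Lemma \ref{lem-basic-cc}. For the Lipschitz/differentiability claim in $(ii)$ I would first observe that $t\mapsto x_{\varepsilon(t)}$ is the composition of the locally Lipschitz map $\varepsilon\mapsto x_\varepsilon$ (Lemma \ref{lem-basic-cc}) with the $\mathcal{C}^1$, hence locally Lipschitz, function $\varepsilon(\cdot)$, which on compact subintervals of $]t_0,+\infty[$ takes values in a compact subinterval of $]0,+\infty[$. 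A composition of Lipschitz maps being Lipschitz, the curve $t\mapsto x_{\varepsilon(t)}$ is Lipschitz on compact intervals and therefore almost everywhere differentiable, exactly as in Lemma \ref{lem-basic-cc}; all subsequent manipulations are then understood at points of differentiability.

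For $(i)$, I would differentiate $t\mapsto \varphi_t(x_{\varepsilon(t)}) = f(x_{\varepsilon(t)}) + \tfrac{\varepsilon(t)}{2}\|x_{\varepsilon(t)}\|^2$ by the chain rule. This produces two contributions: the explicit dependence through the coefficient $\varepsilon(t)$, giving the term $\tfrac{1}{2}\dot{\varepsilon}(t)\|x_{\varepsilon(t)}\|^2$, and the implicit dependence through $x_{\varepsilon(t)}$, which collects into $\big\langle \nabla f(x_{\varepsilon(t)}) + \varepsilon(t)\,x_{\varepsilon(t)},\, \tfrac{d}{dt}x_{\varepsilon(t)}\big\rangle$. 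The latter vanishes identically by \eqref{333}, leaving precisely the claimed identity. This is an envelope-type computation; the only care needed is the legitimacy of the differentiation, guaranteed almost everywhere by the previous paragraph.

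The core of $(ii)$ is a finite-difference monotonicity estimate. I would write the optimality condition at two instants $t$ and $s$, subtract them, and pair the result with $x_{\varepsilon(t)}-x_{\varepsilon(s)}$. The gradient difference contributes a nonnegative quantity by monotonicity of $\nabla f$ (convexity of $f$), so it can be discarded, yielding $\big\langle \varepsilon(t)x_{\varepsilon(t)} - \varepsilon(s)x_{\varepsilon(s)},\, x_{\varepsilon(t)}-x_{\varepsilon(s)}\big\rangle \le 0$. Using the splitting $\varepsilon(t)x_{\varepsilon(t)} - \varepsilon(s)x_{\varepsilon(s)} = \varepsilon(t)\big(x_{\varepsilon(t)}-x_{\varepsilon(s)}\big) + \big(\varepsilon(t)-\varepsilon(s)\big)x_{\varepsilon(s)}$, this rearranges into $\varepsilon(t)\,\|x_{\varepsilon(t)}-x_{\varepsilon(s)}\|^2 \le \big(\varepsilon(s)-\varepsilon(t)\big)\big\langle x_{\varepsilon(s)},\, x_{\varepsilon(t)}-x_{\varepsilon(s)}\big\rangle$. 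Dividing by $(t-s)^2$ and letting $s\to t$ at a point of differentiability gives the announced inequality, since the difference quotients $\tfrac{x_{\varepsilon(t)}-x_{\varepsilon(s)}}{t-s}$ converge to $\tfrac{d}{dt}x_{\varepsilon(t)}$, $x_{\varepsilon(s)}\to x_{\varepsilon(t)}$, and $\tfrac{\varepsilon(s)-\varepsilon(t)}{t-s}\to -\dot{\varepsilon}(t)$. I expect this passage to the limit to be the main obstacle: one must work at an instant where $t\mapsto x_{\varepsilon(t)}$ is differentiable and check that the quadratic difference quotient on the left and the bilinear one on the right both converge, which is where the Lipschitz estimate of Lemma \ref{lem-basic-cc} and the sign $\dot{\varepsilon}\le 0$ (from $\varepsilon$ nonincreasing) are invoked.

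Finally, the concluding inequality is immediate from $(ii)$ by Cauchy--Schwarz: since $-\dot{\varepsilon}(t)/\varepsilon(t)\ge 0$, the right-hand side is bounded above by $-\tfrac{\dot{\varepsilon}(t)}{\varepsilon(t)}\,\big\|\tfrac{d}{dt}x_{\varepsilon(t)}\big\|\,\|x_{\varepsilon(t)}\|$, and dividing through by $\big\|\tfrac{d}{dt}x_{\varepsilon(t)}\big\|$ (the case of a vanishing velocity being trivial) yields $\big\|\tfrac{d}{dt}x_{\varepsilon(t)}\big\| \le -\tfrac{\dot{\varepsilon}(t)}{\varepsilon(t)}\,\|x_{\varepsilon(t)}\|$.
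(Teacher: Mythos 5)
Your part (ii) and the concluding estimate reproduce the paper's own proof essentially verbatim: the paper likewise establishes the regularity of $t\mapsto x_{\varepsilon(t)}$ by transporting the local Lipschitz property of the viscosity curve through the $\mathcal C^1$ reparametrization $\varepsilon(\cdot)$, then writes the optimality conditions $-\varepsilon(t)x_{\varepsilon(t)}=\nabla f(x_{\varepsilon(t)})$ at two instants $t$ and $t+h$, discards the nonnegative gradient term by monotonicity of $\nabla f$, divides by $h^2$, and passes to the limit at points of differentiability; the final inequality is then Cauchy--Schwarz, exactly as you argue.

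Part (i) is where you genuinely diverge, and where your argument as written falls short of the statement. The lemma claims the identity \emph{for each} $t\geq t_0$, but your envelope-type computation differentiates through the curve $x_{\varepsilon(t)}$, hence is legitimate only at the points where that curve is differentiable, i.e.\ almost everywhere --- a restriction you yourself flag. The paper sidesteps this entirely: it identifies $\varphi_t(x_{\varepsilon(t)})=f_{1/\varepsilon(t)}(0)$ with the Moreau envelope of $f$ at the origin and invokes the classical formula $\frac{d}{d\theta}f_{\theta}(x)=-\frac{1}{2}\|\nabla f_{\theta}(x)\|^2$, which holds for \emph{every} $\theta>0$ without any differentiability of $\theta\mapsto \prox_{\theta f}(x)$; combined with $\nabla f_{1/\varepsilon(t)}(0)=-\varepsilon(t)x_{\varepsilon(t)}$, this gives the identity at every $t$. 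Your more elementary route can be repaired without the Moreau envelope machinery: the value function $h(\varepsilon)=\inf_{\xi\in\cH}\left\lbrace f(\xi)+\frac{\varepsilon}{2}\|\xi\|^2\right\rbrace$ is an infimum of affine functions of $\varepsilon$, hence concave and locally Lipschitz on $]0,+\infty[$, so $t\mapsto \varphi_t(x_{\varepsilon(t)})=h(\varepsilon(t))$ is absolutely continuous; since the a.e.\ derivative you compute, $\frac{1}{2}\dot{\varepsilon}(t)\|x_{\varepsilon(t)}\|^2$, is a continuous function of $t$, the composite is in fact $\mathcal C^1$ and the identity is upgraded to all $t\geq t_0$. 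Without some such patch (or the paper's Moreau-envelope argument), your proof of (i) only delivers the almost-everywhere version of a pointwise claim.
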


\begin{proof} 

 $(i)$  We use the differentiability properties of the Moreau envelope. Recall that, given $\theta >0$
 \begin{eqnarray}
 f_{\theta} (x)&=& \inf_{\xi\in H} \left\lbrace f(\xi)+\frac{1}{2 \theta}\|x-\xi\|^{2}\right\rbrace \label{def:prox-bb0}\\
  &=& f (\prox_{\theta f}(x)) + \frac{1}{2 \theta} \| x - \prox_{\theta f}(x)\| ^2 ,  \label{def:prox-bb}
\end{eqnarray}
  \ie $\prox_{\theta f}(x)$ is the unique point where the infimum in \eqref{def:prox-bb0} is achieved.
  One can consult \cite[section 12.4]{BC} for more details on the Moreau envelope.
 We thus have    
	$$\varphi_{t}(x_{\varepsilon(t)})=\inf_{\xi\in H} \left\lbrace f(\xi)+\frac{\varepsilon(t)}{2}\|\xi-0\|^{2}\right\rbrace=f_{\frac{1}{\varepsilon(t)}}(0).$$ 
	 Since  $ \dfrac{d}{d\theta}f_{\theta}(x)=-\frac{1}{2}\|\nabla f_{\theta}(x)\|^{2}, $ (see  \cite[Appendix, Lemma 3]{ABCR}),    we have:
	$$ \dfrac{d}{dt}f_{\theta(t)}(x)=-\frac{\dot{\theta}(t)}{2}\|\nabla f_{\theta(t)}(x)\|^{2} .$$ 
	Therefore, 
	\begin{equation}\label{4}
	 \dfrac{d}{dt} \varphi_{t}(x_{\varepsilon(t)})=\dfrac{d}{dt}\left(f_{\frac{1}{\varepsilon(t)}}(0)\right)=\frac{1}{2}\dfrac{\dot{\varepsilon}(t)}{\varepsilon^{2}(t)}\|\nabla f_{\frac{1}{\varepsilon(t)}}(0)\|^{2}.
	\end{equation}
On the other hand, we have 
$$ \nabla\varphi_{t}(x_{\varepsilon(t)})=0\Longleftrightarrow \nabla f(x_{\varepsilon(t)})+\varepsilon(t)x_{\varepsilon(t)}=0\Longleftrightarrow x_{\varepsilon(t)}= 
\prox_{ \frac1{\varepsilon (t)} f}(0).
$$	
Since  $\nabla f_{\frac{1}{\varepsilon(t)}}(0)=\varepsilon(t)\left(0-  \prox_{ \frac1{\varepsilon (t)} f}(0)\right), $  we get  
$ \nabla f_{\frac{1}{\varepsilon(t)}}(0)=-\varepsilon(t)x_{\varepsilon(t)} $.  This combined with \eqref{4} gives
$$ \dfrac{d}{dt} \varphi_{t}(x_{\varepsilon(t)})=\frac{1}{2}\dot{\varepsilon}(t)\|x_{\varepsilon(t)}\|^{2}.$$ 
\item[$(ii)$] We have 
$$ -\varepsilon(t)x_{\varepsilon(t)}=\nabla f(x_{\varepsilon(t)})\quad\hbox{and}\quad -\varepsilon(t+h)x_{\varepsilon(t+h)}=\nabla f(x_{\varepsilon(t+h)}). $$
According to the monotonicity  of $\nabla f,$ we have
$$ \langle \varepsilon(t)x_{\varepsilon(t)}-\varepsilon(t+h)x_{\varepsilon(t+h)},x_{\varepsilon(t+h)}- x_{\varepsilon(t)}  \rangle \geq 0 ,$$
which implies 
$$ -\varepsilon(t)\|x_{\varepsilon(t+h)}- x_{\varepsilon(t)}\|^{2} + \left(\varepsilon(t)-\varepsilon(t+h)\right) \langle x_{\varepsilon(t+h)},x_{\varepsilon(t+h)}- x_{\varepsilon(t)}  \rangle \geq 0 .$$
After division by $h^{2},$ we obtain 
\begin{equation}\label{eq:diff-Tikh}
 \dfrac{ \left(\varepsilon(t)-\varepsilon(t+h)\right)}{h} \left\langle x_{\varepsilon(t+h)},\dfrac{x_{\varepsilon(t+h)}- x_{\varepsilon(t)}}{h} \right \rangle \geq \varepsilon(t)\left\|\dfrac{x_{\varepsilon(t+h)}- x_{\varepsilon(t)}}{h}\right\|^{2} .
 \end{equation}
We  rely on the differentiability properties of the viscosity curve $\epsilon \mapsto x_{\epsilon}= \argmin \left\lbrace f(\xi) + \frac{\epsilon}{2}\|\xi\|^2 \right\rbrace$.
 According to \cite{AttCom}, \cite{Hirstoaga}, \cite{Torralba}, the viscosity curve is  Lipschitz continuous on the compact intervals of $]0, +\infty[$. So it is absolutely continuous, and  almost everywhere differentiable. Therefore, the mapping $t \mapsto x_{\epsilon (t)}$ satisfies the same differentiability properties. 
 By letting $h \rightarrow 0$ in \eqref{eq:diff-Tikh} we obtain that, for almost every $t\geq t_0$
 $$-\dot{\varepsilon}(t) \left\langle x_{\varepsilon(t)},\dfrac{d}{dt}x_{\varepsilon(t)} \right \rangle \geq \varepsilon(t)\left\|\dfrac{d}{dt}x_{\varepsilon(t)} \right\|^{2} ,$$
which gives the claim. The last statement follows from  Cauchy-Schwarz inequality.\qed
\end{proof}

\subsection{The steepest descent with vanishing Tikhonov regularization: Lyapunov analysis}
In the Lyapunov analysis of the continuous steepest descent with vanishing Tikhonov regularization (\ie $\varepsilon (t) \to 0$ as $t \to +\infty$) that we recall below
\begin{equation}\label{damped-id-b5}
 \dot{x}(t) +  \nabla f (x(t)) + \varepsilon (t) x(t) =0,
\end{equation}
the following function plays a central role 
\begin{equation}\label{def:3bb}
E(t)\eqdef 
\left(\varphi_{t}(x(t))-\varphi_{t}(x_{\varepsilon(t)})\right) +\dfrac{\varepsilon(t)}{2}\|x(t)-x_{\varepsilon(t)}\|^{2},
\end{equation}
together with
\begin{equation}\label{def:gamma}
\gamma(t)\eqdef \exp\left(\displaystyle \int_{t_1}^{t} \varepsilon(s)ds\right).
\end{equation}
Let us state our main convergence result. 
\begin{theorem}\label{strong-conv-thm-b}
	Let  $x(\cdot): [t_0, + \infty[ \to \cH$ be a solution trajectory of the system \eqref{damped-id}.	Define $E(t)$ and $\gamma (\cdot)$ respectively by \eqref{def:3bb} and \eqref{def:gamma}.
	\begin{enumerate}
	\item 	Then,  the following properties are satisfied:  there exists $t_1>0$ such that for all $ t\geq t_1$
\begin{eqnarray}  
&&	E(t)\leq \dfrac{\gamma(t_1)E(t_1)}{\gamma(t)}  -   \dfrac{\|x^{*}\|^{2}}{\gamma(t)}  \displaystyle\int_{t_1}^{t}\dot \varepsilon(s) \gamma(s) ds   ;  \label{Lyap-basic1} \\
&& 	f(x(t))-\min_{\cH} f \leq \dfrac{\gamma(t_1)E(t_1)}{\gamma(t)}  +   \dfrac{\|x^{*}\|^{2}}2\left[\varepsilon(t) -   \dfrac{2}{\gamma(t)}  \displaystyle\int_{t_1}^{t}\dot \varepsilon(s) \gamma(s) ds\right];   \hspace{2cm} \label{contr:fx(t)}\\
&&	 \|x(t) -x_{\varepsilon(t)}\|^2 \leq  \frac{E(t)}{\varepsilon(t)} \leq \dfrac{\gamma(t_1)E(t_1)}{\varepsilon(t)\gamma(t)}   -   \dfrac{\|x^{*}\|^{2}}{\varepsilon(t)\gamma(t)}  \displaystyle\int_{t_1}^{t}\dot \varepsilon(s) \gamma(s) ds.\label{contr:x(t)}
\end{eqnarray}	
	\item Suppose that one of the two following conditions is satisfied:

\medskip
	
	\quad $(i)$\,  $\lim_{t\to +\infty} \frac{E(t)}{\varepsilon(t)}=0$

or

\quad $(ii)$\   $\displaystyle \int_{t_0}^{+\infty} 	\varepsilon(t) dt = + \infty$. 
	 
\smallskip
	 
Then  $x(t)$ converges strongly to $x^*$ as  $t\to +\infty$.
	\end{enumerate}
\end{theorem}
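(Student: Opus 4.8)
The plan is to reduce all three estimates of part~1 to a single first-order differential inequality for the energy $E$ of \eqref{def:3bb}, which I then integrate against the exponential weight $\gamma$ of \eqref{def:gamma}. First I would differentiate $E$ along the trajectory. Writing $\dot x(t)=-\nabla\varphi_t(x(t))$ from \eqref{1sans} and using $\nabla f(x(t))+\varepsilon(t)x(t)=-\dot x(t)$, the term $\frac{d}{dt}\varphi_t(x(t))$ produces $-\|\dot x(t)\|^2+\frac{\dot\varepsilon(t)}{2}\|x(t)\|^2$; the term $\frac{d}{dt}\varphi_t(x_{\varepsilon(t)})$ is handled by Lemma~\ref{lem1}(i), giving $\frac{1}{2}\dot\varepsilon(t)\|x_{\varepsilon(t)}\|^2$; and the quadratic term contributes $\frac{\dot\varepsilon(t)}{2}\|x(t)-x_{\varepsilon(t)}\|^2+\varepsilon(t)\langle x(t)-x_{\varepsilon(t)},\dot x(t)-\frac{d}{dt}x_{\varepsilon(t)}\rangle$. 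After collecting, the three $\dot\varepsilon$-terms combine neatly into $\dot\varepsilon(t)\langle x(t),x(t)-x_{\varepsilon(t)}\rangle$.

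The heart of the computation is then to estimate $\dot E(t)+\varepsilon(t)E(t)$. For the cross term I would use the $\varepsilon(t)$-strong monotonicity of $\nabla\varphi_t$ together with $\nabla\varphi_t(x_{\varepsilon(t)})=0$, which gives $\varepsilon(t)\langle x(t)-x_{\varepsilon(t)},\dot x(t)\rangle\le-\varepsilon(t)^2\|x(t)-x_{\varepsilon(t)}\|^2$; for the term $\varepsilon(t)\big(\varphi_t(x(t))-\varphi_t(x_{\varepsilon(t)})\big)$ coming from $\varepsilon(t)E(t)$ I would use the strong-convexity bound $\varphi_t(x(t))-\varphi_t(x_{\varepsilon(t)})\le\frac{1}{2\varepsilon(t)}\|\dot x(t)\|^2$. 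The remaining indefinite pieces $\dot\varepsilon(t)\langle x_{\varepsilon(t)},x(t)-x_{\varepsilon(t)}\rangle$ and $-\varepsilon(t)\langle x(t)-x_{\varepsilon(t)},\frac{d}{dt}x_{\varepsilon(t)}\rangle$ are controlled by Cauchy--Schwarz and the velocity estimate of Lemma~\ref{lem1}(ii), $\|\frac{d}{dt}x_{\varepsilon(t)}\|\le-\frac{\dot\varepsilon(t)}{\varepsilon(t)}\|x_{\varepsilon(t)}\|$. Completing the square in $\|x(t)-x_{\varepsilon(t)}\|$ and invoking $\|x_{\varepsilon(t)}\|\le\|x^*\|$ (Lemma~\ref{lem-basic-c}(i)) absorbs everything, and I expect to land on
\[
\dot E(t)+\varepsilon(t)E(t)\le-\dot\varepsilon(t)\|x^*\|^2 .
\]

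Next I would multiply through by $\gamma$. Since $\dot\gamma(t)=\varepsilon(t)\gamma(t)$, the inequality reads $\frac{d}{dt}\big(\gamma(t)E(t)\big)\le-\dot\varepsilon(t)\gamma(t)\|x^*\|^2$, and integrating from $t_1$ to $t$ gives \eqref{Lyap-basic1}. The value estimate \eqref{contr:fx(t)} follows by adding $\frac{\varepsilon(t)}{2}\|x^*\|^2$ via \eqref{keybb-00}, and the distance estimate \eqref{contr:x(t)} follows from \eqref{est:basic1}. Part~2(i) is then immediate: \eqref{est:basic1} gives $\|x(t)-x_{\varepsilon(t)}\|^2\le E(t)/\varepsilon(t)\to0$, and since $x_{\varepsilon(t)}\to x^*$ by Lemma~\ref{lem-basic-c}(ii), we conclude $x(t)\to x^*$ strongly.

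The main obstacle is part~2(ii), the slow-parametrization case $\int\varepsilon=\infty$. Here the explicit bound \eqref{contr:x(t)} is too lossy: an integration by parts (using $\dot\gamma=\varepsilon\gamma$) shows that $E(t)/\varepsilon(t)$ is only \emph{bounded} and need not vanish (for $\varepsilon(t)=\delta/t$ one finds $\limsup E/\varepsilon\le\|x^*\|^2/(\delta-1)$), so hypothesis (i) cannot be recovered from the Lyapunov estimate alone. Instead I would run the classical qualitative argument: use boundedness of $E$ to bound the trajectory; show that every weak cluster point of $x(t)$ belongs to $S$ (via $\varepsilon(t)\to0$ and $f(x(t))\to\min_{\cH}f$, which the bound \eqref{contr:fx(t)} yields); then exploit $\int\varepsilon=\infty$ to prove $\limsup_{t\to+\infty}\|x(t)\|\le\|x^*\|$ through a Gr\"onwall comparison on $\frac{1}{2}\|x(t)-x^*\|^2$, whose derivative is controlled by $-\varepsilon(t)\langle x(t)-x^*,x(t)\rangle$. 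Combining these facts, any weak cluster point is a minimizer of norm at most $\|x^*\|$, hence equals $x^*$, giving weak convergence $x(t)\rightharpoonup x^*$; together with $\|x(t)\|\to\|x^*\|$ this upgrades to strong convergence. The genuinely delicate step is the $\limsup\|x(t)\|\le\|x^*\|$ estimate, which is where the hypothesis $\int\varepsilon=\infty$ is really used and which follows the line of the classical Tikhonov arguments.
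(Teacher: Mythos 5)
Your proof of part 1 is essentially the paper's proof. You differentiate $E$ using Lemma \ref{lem1}(i) for $\frac{d}{dt}\varphi_t(x_{\varepsilon(t)})$, control $\frac{d}{dt}x_{\varepsilon(t)}$ by Lemma \ref{lem1}(ii), invoke $\|x_{\varepsilon(t)}\|\le\|x^*\|$ from Lemma \ref{lem-basic-c}(i), land on $\dot E(t)+\varepsilon(t)E(t)\le-\dot\varepsilon(t)\|x^*\|^2$, and integrate against $\gamma$ using $\dot\gamma=\varepsilon\gamma$; the estimates \eqref{contr:fx(t)} and \eqref{contr:x(t)} then follow from Lemma \ref{lem-basic-b} exactly as in the paper. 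The intermediate bookkeeping differs only cosmetically: where the paper uses the strong convexity inequality for $-\varepsilon\langle\nabla\varphi_t(x),x-x_{\varepsilon}\rangle$ (which cancels $\varepsilon\bigl(\varphi_t(x)-\varphi_t(x_{\varepsilon})\bigr)$ and $\frac{\varepsilon^2}{2}\|x-x_{\varepsilon}\|^2$ exactly) and then Young's inequality with a free parameter chosen as $1/b(t)=-\dot\varepsilon(t)/\varepsilon(t)$, you use strong monotonicity plus the Polyak-type bound and a square completion; these are the same estimate in different clothing. One concrete caution: the square must be completed against the retained term $\dot\varepsilon(t)\|x-x_{\varepsilon}\|^2$, namely, with $u=\|x(t)-x_{\varepsilon(t)}\|$,
\begin{equation*}
\dot\varepsilon\, u^2-2\dot\varepsilon\,\|x_{\varepsilon(t)}\|\,u
=\dot\varepsilon\left(u-\|x_{\varepsilon(t)}\|\right)^2-\dot\varepsilon\,\|x_{\varepsilon(t)}\|^2
\le-\dot\varepsilon\,\|x^*\|^2,
\end{equation*}
and not against $-\frac{\varepsilon^2}{2}u^2$: the latter yields the remainder $\frac{2\dot\varepsilon^2}{\varepsilon^2}\|x^*\|^2$, which is not dominated by $-\dot\varepsilon\|x^*\|^2$ in general (for instance $\varepsilon(t)=\delta/t$ with $1<\delta<2$), so your stated endpoint is reachable but only via the first completion. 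For part 2, case (i) you argue exactly as the paper does. For case (ii) the paper simply cites \cite[Theorem 2]{CPS}, whereas you outline reproving it by the classical Tikhonov scheme; your observation that hypothesis (i) cannot be recovered from the Lyapunov bound in the slow case is correct and consistent with Theorem \ref{thm:model-ab}, where $E(t)=\mathcal O(1/t)$ while $\varepsilon(t)=\delta/t$. Your sketch is sound (and your claim that \eqref{contr:fx(t)} gives $f(x(t))\to\min_{\cH}f$ under $\int\varepsilon=\infty$ does check out, by integrating $\int_{t_1}^t\dot\varepsilon\gamma\,ds$ by parts and using $\varepsilon(t)\to0$, $\gamma(t)\to+\infty$), but it stops short precisely at the delicate step you flag. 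A cleaner execution of that step avoids the separate norm-limsup: run the comparison on $q(t)=\frac12\|x(t)-x^*\|^2$, for which monotonicity of $\nabla f$ gives $\dot q(t)\le-2\varepsilon(t)q(t)+\varepsilon(t)\langle x^*,x^*-x(t)\rangle$; a Gronwall-type lemma with $\int\varepsilon=\infty$ yields $\limsup_t q(t)\le\frac12\limsup_t\langle x^*,x^*-x(t)\rangle$, and since every weak cluster point $\bar x$ lies in $S$ and the projection characterization of $x^*=\mbox{\rm proj}_{S}0$ gives $\langle x^*,x^*-\bar x\rangle\le0$, one gets $q(t)\to0$, i.e.\ strong convergence directly. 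Since the paper outsources case (ii) to the cited theorem, your more self-contained route is legitimate, but to stand on its own it would need this last step carried out rather than sketched.
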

\begin{proof} Since the mapping $t \mapsto x_{\epsilon (t)}$ is absolutely continuous (indeed locally Lipschitz) the classical derivation  chain rule can be applied to compute the derivative of the function $E(\cdot)$, see \cite[section VIII.2]{Bre2}.
According to Lemma \ref{lem1} $i)$,  for almost all $t\geq t_0$
the derivative of  $E(\cdot)$  is given by:
	\begin{equation}\label{6}
	\begin{array}{lll}
	\dot{E}(t) &=&  \langle \nabla\varphi_{t}(x(t)),\dot{x}(t)\rangle+\dfrac{1}{2}\dot{\varepsilon}(t)\|x(t)\|^{2}-\dfrac{1}{2}\dot{\varepsilon}(t)\|x_{\varepsilon(t)}\|^{2}\\
	&&+ \dfrac{1}{2}\dot{\varepsilon}(t) \norm{x(t)-x_{\varepsilon(t)}}^2 + \varepsilon(t)	\langle\frac{d}{dt}(x(t)-x_{\varepsilon(t)})\;,\;x(t)-x_{\varepsilon(t)}  \rangle.
	\end{array}      
	\end{equation}
According to \eqref{1sans} and the $\varepsilon (t)$ strong  convexity of $\varphi_{t}$, we get
\begin{eqnarray}
	\dot{E}(t) &=& -\norm{\nabla\varphi_{t}(x(t))}^2 + \dfrac{1}{2}\dot{\varepsilon}(t)\left(\|x(t)\|^{2} 
	-\norm{x_{\varepsilon(t)}}^{2}\right) + \dfrac{1}{2}\dot{\varepsilon}(t) \norm{x(t)-x_{\varepsilon(t)}}^2\nonumber
	\\
	& &
	- \varepsilon(t)\left\langle \nabla\varphi_{t}(x(t)) \;,\;  x(t)-x_{\varepsilon(t)}\right\rangle  - \varepsilon(t)\left\langle\frac{d}{dt}\left(x_{\varepsilon(t)}\right)\;,\; x(t)-x_{\varepsilon(t)}  \right\rangle \nonumber \\
	& \leq &  -\norm{\nabla\varphi_{t}(x(t))}^2 + \dfrac{1}{2}\dot{\varepsilon}(t)\left(\|x(t)\|^{2} 
	-\norm{x_{\varepsilon(t)}}^{2}\right) + \dfrac{1}{2}\dot{\varepsilon}(t) \norm{x(t)-x_{\varepsilon(t)}}^2  \nonumber \\
	& &
	+ \varepsilon(t)\left( \varphi_{t}(x_{\varepsilon(t)})-\varphi_{t}(x(t))-\frac{\varepsilon(t)}{2} \| x(t)-x_{\varepsilon(t)}\|^{2}\right)  - \varepsilon(t)\left\langle\frac{d}{dt}\left(x_{\varepsilon(t)}\right)\;,\; x(t)-x_{\varepsilon(t)}  \right\rangle. \, \, \label{eq:basic_Lyap_1}
\end{eqnarray}
According to  Lemma \ref{lem-basic-c} (i) and Lemma \ref{lem1} (ii), we obtain 
\begin{eqnarray}
- \left\langle\frac{d}{dt}\left(x_{\varepsilon(t)}\right)\;,\; x(t)-x_{\varepsilon(t)}  \right\rangle &\leq& 
\dfrac{b(t)}{2}\norm{\dfrac{d}{dt} x_{\varepsilon(t)}}^{2} +\dfrac{1}{2b(t)}\|x(t)-x_{\varepsilon(t)}\|^{2} \nonumber\\
&\leq& \dfrac{b(t)\dot \varepsilon^2(t)}{2\varepsilon^2(t)}\|x_{\varepsilon(t)}\|^{2} +\dfrac{1}{2b(t)}\|x(t)-x_{\varepsilon(t)}\|^{2},\label{parameter_b}
\end{eqnarray}
where $b(\cdot)$ is a continuous and positive real function to be choosen later.
Combining   \eqref{eq:basic_Lyap_1}  with \eqref{parameter_b}, we deduce that  
\begin{eqnarray*}
	\dot{E}(t) & \leq &  -\norm{\nabla\varphi_{t}(x(t))}^2 + \dfrac{1}{2}\dot{\varepsilon}(t)\|x(t)\|^{2} 
	 - \dfrac{1}{2}{\varepsilon}(t)\left( {\varepsilon}(t) -\frac{\dot{\varepsilon }(t)}{\varepsilon(t)} -\frac1{b(t)}\right)\norm{x(t)-x_{\varepsilon(t)}}^2\\
	& &
+ \varepsilon(t)\left( \varphi_{t}(x_{\varepsilon(t)})-\varphi_{t}(x(t))\right)  + \dfrac{\dot \varepsilon(t)}2\left(\dfrac{b(t)\dot \varepsilon(t)}{\varepsilon(t)}-1\right)\|x_{\varepsilon(t)}\|^{2} .
\end{eqnarray*}
Let us now build the differential inequality satisfied by $E (\cdot)$.
	\begin{eqnarray}
	&&\dot{E}(t)+\varepsilon(t)E(t)  \leq    -\norm{\nabla\varphi_{t}(x(t))}^2 + \dfrac{1}{2}\dot{\varepsilon}(t)\|x(t)\|^{2}  \nonumber\\
	& & \hspace{0.8cm}- \dfrac{1}{2}{\varepsilon}(t)\left(  -\frac{\dot{\varepsilon }(t)}{\varepsilon(t)} -\frac1{b(t)}\right)\norm{x(t)-x_{\varepsilon(t)}}^2   + \dfrac{\dot \varepsilon(t)}2\left(\dfrac{b(t)\dot \varepsilon(t)}{\varepsilon(t)}-1\right)\|x_{\varepsilon(t)}\|^{2}. \label{eq:basic_Lyap_2}
	\end{eqnarray}
	Let us now choose $b(\cdot)$ in order to make equal to zero the coefficient of $\norm{x(t)-x_{\varepsilon(t)}}^2$, that is 
$$
\frac1{b(t)}=  -\frac{\dot{\varepsilon }(t)}{\varepsilon(t)} 
$$	
Replacing $b(\cdot)$ by this expression in  \eqref{eq:basic_Lyap_2}, we obtain 		
	\begin{eqnarray}
	&&\dot{E}(t)+\varepsilon(t)E(t) \leq -\norm{\nabla\varphi_{t}(x(t))}^2 + \dfrac{1}{2}\dot{\varepsilon}(t)\|x(t)\|^{2}   - \dot \varepsilon(t) \|x_{\varepsilon(t)}\|^{2}\nonumber\\
	& & \hspace{3cm} \leq  - \dot \varepsilon(t) \|x_{\varepsilon(t)}\|^{2}.
	\end{eqnarray}
By taking $W(t)\eqdef \gamma(t)E(t) $, we conclude that
\begin{eqnarray*}
	\dot{W}(t) &=& \gamma(t)\left(\dot{E}(t)+\varepsilon(t)E(t)\right) 
	\\
	&\leq&  - \gamma(t) \dot \varepsilon(t)\|x^*\|^{2},
\end{eqnarray*}	
Therefore 
\begin{equation}\label{15}
	\dfrac{d}{dt}\left(\gamma(t)E(t)\right) \leq  - \gamma(t) \dot \varepsilon(t)\|x^*\|^{2}.
\end{equation}
By integrating \eqref{15} on $[t_1 , t]$, and dividing by $\gamma(t),$ we obtain  our first claim \eqref{Lyap-basic1} 
	\begin{equation}\label{Lyap-basic1b}
	E(t)\leq \dfrac{\gamma(t_1)E(t_1)}{\gamma(t)}  -   \dfrac{\|x^{*}\|^{2}}{\gamma(t)}  \displaystyle\int_{t_1}^{t}\dot \varepsilon(s) \gamma(s) ds .
	\end{equation}
	Coming back to Lemma \ref{lem-basic-b}, and according to
	(\ref{Lyap-basic1b}),  we get, for any $t\geq t_1$, 
	\begin{eqnarray*}
	f(x(t))-\min_{\mathcal H}f	
	&\leq &  E(t)+\dfrac{\varepsilon(t)}{2}\|x^*\|^{2} \\
	&\leq &   \dfrac{\gamma(t_1)E(t_1)}{\gamma(t)}  +   \dfrac{\|x^{*}\|^{2}}2\left[\varepsilon(t) -   \dfrac{2}{\gamma(t)}  \displaystyle\int_{t_1}^{t}\dot \varepsilon(s) \gamma(s) ds\right]
	\end{eqnarray*}
Similarly, according to Lemma \ref{lem-basic-b} and 
	(\ref{Lyap-basic1b}),  we get, for any $t\geq t_1$,	
	\begin{eqnarray*}
	 \|x(t) - x_{\varepsilon(t)}\|^2  & \leq &
	\frac{E(t)}{\varepsilon(t)} \\
	&\leq & \dfrac{\gamma(t_1)E(t_1)}{\varepsilon(t)\gamma(t)}   -   \dfrac{\|x^{*}\|^{2}}{\varepsilon(t)\gamma(t)}  \displaystyle{ \int_{t_1}^{t}\dot \varepsilon(s) \gamma(s) ds}.
	\end{eqnarray*}

\noindent 	By Lemma \ref{lem-basic-c} (ii), $\norm{x_{\varepsilon(t)}-x^*}$ converges to zero.
Therefore, $x(t)$ converges strongly to $x^*$ as soon as 
	 $\lim_{t\to +\infty} \frac{E(t)}{\varepsilon(t)}=0$.\\
	 Finally, according to  Cominetti-Peypouquet-Sorin \cite[Theorem 2]{CPS}, we have that $x(t)$ converges strongly to the minimum norm solution $x^*$ as soon as  $\displaystyle \int_{t_0}^{+\infty} 	\varepsilon(t) dt = + \infty$.
	 \qed
		\end{proof}
%

\subsection{Case $\e(t)=\displaystyle\frac{\delta}{t} $, $\delta>1$}\label{sec:particular-cases-b}
 The convergence rate of the values and the strong convergence to the minimum norm solution will be obtained by particularizing  Theorem \ref{strong-conv-thm-b} to this situation. 
	\begin{theorem}\label{thm:model-ab}
	Take $\e(t)=\displaystyle\frac{\delta}{t} $ and  $\delta>1$.
	Let $x : [t_0, +\infty[ \to \mathcal{H}$ be a solution trajectory of
	\begin{equation}\label{eqr1b}
	\dot{x}(t) + \nabla f\left(x(t) \right)+ \frac{\delta}{t} x(t)=0.
	\end{equation}		
Then,  we have  	
	\begin{eqnarray}
	&& \bullet \, E(t) =  \mathcal O \left( \displaystyle\frac{1}{t}   \right) \mbox{ as } \; t \to +\infty;\hspace{8cm}\label{Lyap-basic2b}\\
	&& 
	\bullet \,  f(x(t))-\min_{\cH} f= \mathcal O \left( \displaystyle\frac{1}{t }   \right) \mbox{ as } \; t \to +\infty;\label{contr:fx(t)2b}\\
	&& \bullet \,  \|\nabla f (x(t))\|=\mathcal{O}\left(\dfrac{1}{ t^{\frac{1}{2}}} \right) \mbox{ as } \; t \to +\infty.
	\end{eqnarray}
\hspace{1.3cm }$\bullet$  The solution trajectory $x(\cdot)$ converges strongly to the minimum norm solution $x^*$.
\end{theorem}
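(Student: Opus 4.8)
The plan is to obtain all four assertions by specializing the general estimates of Theorem~\ref{strong-conv-thm-b} to the choice $\varepsilon(t)=\delta/t$, without any new Lyapunov computation. First I would compute the two quantities that drive every bound in that theorem. Since $\int_{t_1}^t \varepsilon(s)\,ds = \delta\ln(t/t_1)$, the weight is $\gamma(t) = (t/t_1)^\delta$, and with $\dot\varepsilon(s) = -\delta/s^2$ one gets, using $\delta>1$,
\begin{equation*}
-\frac{1}{\gamma(t)}\int_{t_1}^t \dot\varepsilon(s)\gamma(s)\,ds = \frac{\delta}{\delta-1}\left(\frac1t - \frac{t_1^{\delta-1}}{t^\delta}\right) = \mathcal O\!\left(\frac1t\right).
\end{equation*}
Plugging this into \eqref{Lyap-basic1}, together with $\gamma(t_1)E(t_1)/\gamma(t) = \mathcal O(t^{-\delta}) = o(1/t)$, yields $E(t)=\mathcal O(1/t)$, which is \eqref{Lyap-basic2b}. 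Here $\delta>1$ is precisely what makes $\int_{t_1}^t s^{\delta-2}\,ds$ grow like $t^{\delta-1}$, so that after dividing by $\gamma(t)\sim t^\delta$ the decay is $1/t$.

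The value estimate \eqref{contr:fx(t)2b} is then immediate: from Lemma~\ref{lem-basic-b}, $f(x(t))-\min_\cH f \le E(t) + \tfrac{\varepsilon(t)}2\|x^*\|^2$, and both terms on the right are $\mathcal O(1/t)$ since $\varepsilon(t)=\delta/t$. Equivalently one reads this directly off \eqref{contr:fx(t)}.

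The gradient estimate is the only step requiring genuine work, and it is where I expect the main obstacle, because $f$ is assumed smooth only on bounded sets. I would first establish that the trajectory is bounded: by \eqref{est:basic1}, $\|x(t)-x_{\varepsilon(t)}\|^2 \le E(t)/\varepsilon(t) = \mathcal O(1/t)\cdot(t/\delta) = \mathcal O(1)$, and since $\|x_{\varepsilon(t)}\|\le\|x^*\|$ by Lemma~\ref{lem-basic-c}$(i)$, the trajectory stays in a fixed ball. On a slightly larger ball $\nabla f$ is Lipschitz with some constant $L$, so $f$ is $L$-smooth there; the gradients $\nabla f(x(t)) = \nabla f(x(t))-\nabla f(x^*)$ are bounded, so a gradient step from $x(t)$ remains in this enlarged ball. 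Applying the descent lemma along that segment gives the standard inequality $\|\nabla f(x(t))\|^2 \le 2L\big(f(x(t))-\min_\cH f\big)$, and combined with \eqref{contr:fx(t)2b} this produces $\|\nabla f(x(t))\|^2 = \mathcal O(1/t)$, i.e. $\|\nabla f(x(t))\|=\mathcal O(t^{-1/2})$. The delicate point is exactly the bookkeeping that keeps the auxiliary gradient step inside the region of smoothness.

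Finally, for the strong convergence I would invoke part~2 of Theorem~\ref{strong-conv-thm-b}. Note that condition $(i)$ is \emph{not} available here: since $E(t)\sim \frac{\delta\|x^*\|^2}{\delta-1}\,\frac1t$ and $\varepsilon(t)=\delta/t$, the ratio $E(t)/\varepsilon(t)$ tends to the positive constant $\|x^*\|^2/(\delta-1)$ rather than to $0$. Instead I would use condition $(ii)$: $\int_{t_0}^{+\infty}\varepsilon(t)\,dt = \delta\int_{t_0}^{+\infty}\frac{dt}{t} = +\infty$, so the Cominetti--Peypouquet--Sorin argument invoked in the proof of Theorem~\ref{strong-conv-thm-b} guarantees that $x(t)$ converges strongly to the minimum norm solution $x^*$.
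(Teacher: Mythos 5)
Your proposal is correct and takes essentially the same route as the paper: specialize Theorem \ref{strong-conv-thm-b} with $\gamma(t)=(t/t_1)^{\delta}$, use $\delta>1$ to get $E(t)=\mathcal O(1/t)$ and hence the value rate, deduce the gradient rate from Lemma \ref{ext_descent_lemma}, and conclude strong convergence from the Cominetti--Peypouquet--Sorin theorem since $\int_{t_0}^{+\infty}\varepsilon(t)\,dt=+\infty$. You are in fact slightly more careful than the paper on two points: you justify the local Lipschitz constant in the gradient step by first proving the trajectory bounded (via $\|x(t)-x_{\varepsilon(t)}\|^2\le E(t)/\varepsilon(t)=\mathcal O(1)$ together with Lemma \ref{lem-basic-c}$(i)$), a step the paper leaves implicit in this theorem (it spells it out only in Theorems \ref{thm:model-inertial-b} and \ref{thm:model-inertial}), and you correctly note that condition $(i)$ of Theorem \ref{strong-conv-thm-b} is unavailable so that $(ii)$ must be used --- with the minor caveat that your claimed equivalence $E(t)\sim\frac{\delta\|x^*\|^2}{\delta-1}\frac{1}{t}$ is only an upper bound, so the honest statement is that $E(t)/\varepsilon(t)=\mathcal O(1)$ cannot be improved to $o(1)$ from these estimates, which does not affect your conclusion.
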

\begin{proof}
a) Take $\e(t)=\displaystyle\frac{\delta}{t } $ with $\delta >1$. We get
$$
\gamma(t)\eqdef \exp\left(\displaystyle \int_{t_1}^{t} \varepsilon(s)ds\right) = \exp \left(\ln \left( \frac{t}{t_1} \right)^\delta\right) = \left( \frac{t}{t_1} \right)^\delta.
$$
By Theorem \ref{strong-conv-thm-b}, we get, for $t$ large enough
\begin{eqnarray*}  
E(t) &\leq &\dfrac{\gamma(t_1)E(t_1)}{\gamma(t)}  -   \dfrac{\|x^{*}\|^{2}}{\gamma(t)}  \displaystyle\int_{t_1}^{t}\dot \varepsilon(s) \gamma(s) ds   \\
&\leq & \dfrac{C}{t^\delta}  +   \|x^{*}\|^{2}\left( \frac{t_1} {t}\right)^\delta  \displaystyle\int_{t_1}^{t} \frac{\delta}{s^{2}}\left( \frac{s}{t_1} \right)^\delta  ds \\
&\leq & \dfrac{C}{t^\delta}  + \delta \|x^{*}\|^{2}  \frac{1} {t^\delta } \displaystyle\int_{t_1}^{t} s^{\delta -2}  ds  \\
&\leq & \dfrac{C}{t^\delta}  + \frac{\delta}{\delta -1} \|x^{*}\|^{2}  \frac{1} {t }.      
\end{eqnarray*}	
Since $\delta >1$ we deduce that 
\begin{center}
$
E(t) =  \mathcal O \left( \displaystyle\frac{1}{t}   \right) \mbox{ as } \; t \to +\infty.
$
\end{center}
According to Theorem \ref{strong-conv-thm-b} we get
\begin{eqnarray*}  
	f(x(t))-\min_{\cH} f &\leq & \dfrac{\gamma(t_1)E(t_1)}{\gamma(t)}  +   \dfrac{\|x^{*}\|^{2}}2\left[\varepsilon(t) -   \dfrac{2}{\gamma(t)}  \displaystyle\int_{t_1}^{t}\dot \varepsilon(s) \gamma(s) ds\right] \\
 &=&	 \dfrac{C}{t^\delta}
 +  \delta  \dfrac{\|x^{*}\|^{2}}{2t}   +  + \frac{\delta}{\delta -1} \|x^{*}\|^{2}  \frac{1} {t }  .
\end{eqnarray*}	
Since $\delta >1$ we deduce that
\begin{center}
$
f(x(t))-\min_{\cH} f =  \mathcal O \left( \displaystyle\frac{1}{t}   \right) \mbox{ as } \; t \to +\infty.
$
\end{center}
According to the above estimate and Lemma \ref{ext_descent_lemma} we immediately obtain the following  convergence rate of the gradients towards zero
$$ \|\nabla f (x(t))\|=\mathcal{O}\left(\dfrac{1}{ t^{\frac{1}{2}}} \right) \mbox{ as } \; t \to +\infty.
$$
Finally, since  $\displaystyle \int_{t_0}^{+\infty} 	\varepsilon(t) dt = + \infty$, according to  Cominetti-Peypouquet-Sorin \cite[Theorem 2]{CPS}, we have that $x(t)$ converges strongly to the minimum norm solution $x^*$.
\qed
\end{proof}

\subsection{Case $\e(t)=\dfrac{1}{t^{r} } $,  $0<r<1$}\label{sec:particular-cases}
	%
Take $\e(t)=\displaystyle\frac{1}{t^{r} } $,  $0<r<1$, $t_0>0$, and consider the system \eqref{damped-id}. The convergence rate of the values and the strong convergence to the minimum norm solution will be obtained by particularizing  Theorem \ref{strong-conv-thm-b} to this situation. 
	\begin{theorem}\label{thm:model-a}
	Take $\e(t)=\displaystyle\frac{1}{t^{r} } $ and  $0<r< 1$.
	Let $x : [t_0, +\infty[ \to \mathcal{H}$ be a solution trajectory of
	\begin{equation}\label{eqr1}
	\dot{x}(t) + \nabla f\left(x(t) \right)+ \frac{1}{t^r} x(t)=0.
	\end{equation}		
Then,  we have  	
	\begin{eqnarray}
	&&\bullet \,  E(t) =  \mathcal O \left( \displaystyle\frac{1}{t}   \right) \mbox{ as } \; t \to +\infty;\hspace{8cm}\label{Lyap-basic2}\\
	&& \bullet \, 
	f(x(t))-\min_{\cH} f= \mathcal O \left( \displaystyle\frac{1}{t^{r} }   \right) \mbox{ as } \; t \to +\infty;\label{contr:fx(t)2bb}\\
	&& \bullet \,  \|x(t) -x_{\varepsilon(t)}\|^2=\mathcal{O}\left(\dfrac{1}{ t^{1-r}}\right) \mbox{ as } \; t \to +\infty. \label{contr:x(t)2b}\\
	&& \bullet \,  \|\nabla f (x(t))\|=\mathcal{O}\left(\dfrac{1}{ t^{\frac{r}{2}}} \right) \mbox{ as } \; t \to +\infty. \label{contr:nabla f 2b}\\
	&& \bullet \mbox{ The solution trajectory }  x(\cdot) \mbox{ converges strongly to the minimum norm solution } x^*.\label{contr:traj  2b}
	\end{eqnarray}
\end{theorem}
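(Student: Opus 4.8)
The plan is to specialize Theorem \ref{strong-conv-thm-b} to the choice $\varepsilon(t)=t^{-r}$, exactly as in the proof of Theorem \ref{thm:model-ab}; the only genuinely new ingredient is the asymptotic evaluation of the weighted integral appearing in the Lyapunov bound \eqref{Lyap-basic1}. First I would compute the integrating factor
\[
\gamma(t)=\exp\left(\int_{t_1}^{t}s^{-r}\,ds\right)=\exp\left(\frac{t^{1-r}-t_1^{1-r}}{1-r}\right),
\]
which, since $0<r<1$, grows faster than every power of $t$; in particular $1/\gamma(t)=o(1/t)$. With $\dot\varepsilon(s)=-r\,s^{-r-1}$, the bound \eqref{Lyap-basic1} becomes
\[
E(t)\leq\frac{\gamma(t_1)E(t_1)}{\gamma(t)}+r\|x^*\|^2\,\frac{I(t)}{\gamma(t)},\qquad I(t):=\int_{t_1}^{t}s^{-r-1}\gamma(s)\,ds,
\]
so the whole first claim reduces to showing $I(t)/\gamma(t)=\mathcal O(1/t)$.

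The key step, and the one I expect to require the most care, is this estimate of $I(t)$. I would exploit $\gamma'(s)=s^{-r}\gamma(s)$ to write $s^{-r-1}\gamma(s)=\gamma'(s)/s$ and integrate by parts:
\[
I(t)=\frac{\gamma(t)}{t}-\frac{\gamma(t_1)}{t_1}+J(t),\qquad J(t):=\int_{t_1}^{t}\frac{\gamma(s)}{s^2}\,ds.
\]
To control the leftover integral $J(t)$ I would compare it back to $I(t)$: since $\gamma(s)s^{-2}=s^{r-1}\,(\gamma'(s)/s)$ and $s^{r-1}\leq\tfrac12$ once $s\geq t_2:=2^{1/(1-r)}$, one obtains $J(t)\leq C+\tfrac12 I(t)$ for a constant $C$ depending only on $t_1,t_2$. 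Substituting into the integration-by-parts identity yields the self-improving inequality $\tfrac12 I(t)\leq\gamma(t)/t+C$, whence $I(t)\leq 2\gamma(t)/t+2C$ and therefore $I(t)/\gamma(t)\leq 2/t+2C/\gamma(t)=\mathcal O(1/t)$. Together with $1/\gamma(t)=o(1/t)$ this establishes $E(t)=\mathcal O(1/t)$.

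The remaining estimates then follow mechanically from the preparatory lemmas, with no further Lyapunov analysis. From \eqref{contr:fx(t)} the value gap is bounded by $o(1/t)+\tfrac12\|x^*\|^2\big[\varepsilon(t)+\mathcal O(1/t)\big]$; since $\varepsilon(t)=t^{-r}$ dominates $t^{-1}$ for $r<1$, this is $\mathcal O(1/t^{r})$. The distance to the viscosity curve comes from \eqref{est:basic1}: $\|x(t)-x_{\varepsilon(t)}\|^2\leq E(t)/\varepsilon(t)=\mathcal O(t^{-1}\cdot t^{r})=\mathcal O(1/t^{1-r})$. This last estimate, combined with Lemma \ref{lem-basic-c}$(i)$ through $\|x(t)\|\leq\|x(t)-x_{\varepsilon(t)}\|+\|x_{\varepsilon(t)}\|\leq\mathcal O(t^{-(1-r)/2})+\|x^*\|$, shows the trajectory is bounded; hence $\nabla f$ is Lipschitz on the set it visits, and the extended descent lemma (Lemma \ref{ext_descent_lemma}) in the form $\|\nabla f(x(t))\|^2\leq 2L\big(f(x(t))-\min_{\cH}f\big)$ yields $\|\nabla f(x(t))\|=\mathcal O(1/t^{r/2})$.

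Finally, strong convergence to $x^*$ follows from either criterion in part~2 of Theorem \ref{strong-conv-thm-b}: indeed $E(t)/\varepsilon(t)=\mathcal O(1/t^{1-r})\to0$, and equivalently $\int_{t_0}^{+\infty}\varepsilon(t)\,dt=\int_{t_0}^{+\infty}t^{-r}\,dt=+\infty$ precisely because $r<1$. I expect the only real obstacle to be the integral estimate for $I(t)$; once the self-referential comparison $J\leq\tfrac12 I+C$ is in place, everything else is routine bookkeeping.
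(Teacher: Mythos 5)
Your proof is correct. It follows the paper's scaffold---specialize Theorem \ref{strong-conv-thm-b} to $\varepsilon(t)=t^{-r}$, then extract the five bullets---but it replaces the paper's one nontrivial analytic step with a genuinely different argument. To bound $I(t)=\int_{t_1}^{t}s^{-r-1}\gamma(s)\,ds$, the paper guesses an explicit comparison function: it checks that for any fixed $\rho<1/r$ and all sufficiently large $s$,
\[
\frac{r}{s^{r+1}}\,\gamma(s)\;\leq\;\frac{d}{ds}\left(\frac{1}{\rho s}\,\gamma(s)\right),
\]
and integrates this pointwise inequality to get $r\,I(t)\leq \gamma(t)/(\rho t)+\mathrm{const}$. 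You instead integrate by parts using $\gamma'(s)=s^{-r}\gamma(s)$ and absorb the remainder $J(t)=\int_{t_1}^{t}\gamma(s)s^{-2}\,ds$ back into $I(t)$ via $s^{r-1}\leq\frac12$ for $s\geq 2^{1/(1-r)}$; since $I(t)$ is finite for each fixed $t$, the rearrangement to $\frac12 I(t)\leq \gamma(t)/t + C$ is legitimate, and both routes give $I(t)/\gamma(t)=\mathcal O(1/t)$, hence $E(t)=\mathcal O(1/t)$. What each buys: your absorption argument requires no guesswork and is mechanical to verify, while the paper's comparison function yields a marginally sharper constant ($\rho$ close to $1/r$ gives asymptotically $r\|x^*\|^2$ where you get $2r\|x^*\|^2$), which is immaterial for $\mathcal O$-statements. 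The downstream steps---\eqref{contr:fx(t)2bb} from \eqref{contr:fx(t)} because $t^{-r}$ dominates $t^{-1}$, \eqref{contr:x(t)2b} from $E(t)/\varepsilon(t)$, and strong convergence from either criterion in part 2 of Theorem \ref{strong-conv-thm-b}---match the paper exactly. One point where you are actually more careful than the paper's proof of this particular theorem: before invoking Lemma \ref{ext_descent_lemma} you establish boundedness of the trajectory via $\|x(t)\|\leq\|x(t)-x_{\varepsilon(t)}\|+\|x_{\varepsilon(t)}\|$ together with Lemma \ref{lem-basic-c}$(i)$, which is genuinely needed since hypothesis $(\mathcal A)$ only gives Lipschitz continuity of $\nabla f$ on bounded sets; the paper spells out this boundedness argument only in the inertial Theorems \ref{thm:model-inertial-b} and \ref{thm:model-inertial}.
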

\begin{proof}
a) Take $\e(t)=\displaystyle\frac{1}{t^{r} } $ with $r <1$. We get
$$
\gamma(t)\eqdef \exp\left(\displaystyle \int_{t_1}^{t} \varepsilon(s)ds\right) = \exp \left(\left[\frac{t^{1-r}}{1-r} - \frac{t_1^{1-r}}{1-r}\right]\right) = K_0\exp \left(\frac{1}{1-r} t^{1-r}\right)
$$
By Theorem \ref{strong-conv-thm-b}, we get, for $t$ large enough
\begin{eqnarray*}  
E(t) &\leq &\dfrac{\gamma(t_1)E(t_1)}{\gamma(t)}  -   \dfrac{\|x^{*}\|^{2}}{\gamma(t)}  \displaystyle\int_{t_1}^{t}\dot \varepsilon(s) \gamma(s) ds   \\
&\leq & \dfrac{\gamma(t_1)E(t_1)}{\gamma(t)}  +   \|x^{*}\|^{2}\exp \left(-\frac{1}{1-r} t^{1-r}\right)   \displaystyle\int_{t_1}^{t} \frac{r}{s^{r+1}}\exp \left(\frac{1}{1-r} s^{1-r}\right)   ds  .    
\end{eqnarray*}	
To majorize this last expression, we notice that, given a positive parameter $\rho$
\begin{eqnarray*}  	\frac{d}{ds}\left( \frac{1}{\rho s} \exp \left(\frac{1}{1-r} s^{1-r}\right)   \right)&=& \frac{1}{\rho }\left( \frac{1}{s^{1+r}}- \frac{1}{s^{2}} \right)    \exp \left(\frac{1}{1-r} s^{1-r}\right) 
 \\
& \geq & \frac{r}{s^{r+1}}\exp \left(\frac{1}{1-r} s^{1-r}\right)
\end{eqnarray*}	
as soon as $\rho< \frac{1}{r}$ (for $s$ sufficiently large).
Therefore
\begin{eqnarray*}  
E(t) 
&\leq & \dfrac{\gamma(t_1)E(t_1)}{\gamma(t)}  +   \|x^{*}\|^{2}\exp \left(-\frac{1}{1-r} t^{1-r}\right)   \displaystyle\int_{t_1}^{t} \frac{d}{ds}\left( \frac{1}{\rho s} \exp \left(\frac{1}{1-r} s^{1-r}\right)   \right)   ds \\
&\leq & \dfrac{\gamma(t_1)E(t_1)}{\gamma(t)}  +   \frac{\|x^{*}\|^{2}}{\rho t}.
\end{eqnarray*}	
Since the term $\dfrac{\gamma(t_1)E(t_1)}{\gamma(t)} $ converges to zero exponentially, we  get
\begin{center}
$
E(t) =  \mathcal O \left( \displaystyle\frac{1}{t}   \right) \mbox{ as } \; t \to +\infty;
$
\end{center}
Similarly, according to Theorem \ref{strong-conv-thm-b}
\begin{eqnarray*}  
	f(x(t))-\min_{\cH} f &\leq & \dfrac{\gamma(t_1)E(t_1)}{\gamma(t)}  +   \dfrac{\|x^{*}\|^{2}}2\left[\varepsilon(t) -   \dfrac{2}{\gamma(t)}  \displaystyle\int_{t_1}^{t}\dot \varepsilon(s) \gamma(s) ds\right] \\
 &=&	 \dfrac{\gamma(t_1)E(t_1)}{\gamma(t)}  
 +   \dfrac{\|x^{*}\|^{2}}{2t^r}   +   \frac{\|x^{*}\|^{2}}{\rho t}.
\end{eqnarray*}	
Since the term $\dfrac{\gamma(t_1)E(t_1)}{\gamma(t)} $ converges to zero exponentially, we  get
\begin{center}
$
f(x(t))-\min_{\cH} f =  \mathcal O \left( \displaystyle\dfrac{1}{t^r}   \right) \mbox{ as } \; t \to +\infty;
$
\end{center}
According to the above estimate and Lemma \ref{ext_descent_lemma} we immediately obtain the following  convergence rate of the gradients towards zero
$$ \|\nabla f (x(t))\|=\mathcal{O}\left(\dfrac{1}{ t^{\frac{r}{2}}} \right) \mbox{ as } \; t \to +\infty.
$$
Finally, according to Theorem \ref{strong-conv-thm-b} and $
E(t) =  \mathcal O \left( \frac{1}{t}   \right)$, we get
\begin{eqnarray*}  
 \|x(t) -x_{\varepsilon(t)}\|^2 &\leq& \frac{E(t)}{\varepsilon(t)}\leq  
 \dfrac{C}{t}t^r= \dfrac{C}{t^{1-r}}.
\end{eqnarray*}	
Since $1-r>0$ we conclude that $\norm{x_{\varepsilon(t)}-x(t)}$ converges to zero. According to lemma \ref{lem-basic-c} we have  $\lim_{t\rightarrow +\infty}\|x_{\varepsilon(t)}-x^{*}\|=0$ .
	Therefore $x(t)$ converges strongly to $x^*$. 
	Indeed, this could be obtained directly as a consequence of the  Cominetti-Peypouquet-Sorin \cite[Theorem 2]{CPS}. Our Lyapunov analysis provides indeed a convergence rate.	
This completes the proof. \qed
\end{proof}

\begin{remark}
Let us provide another convergence rate of the gradients towards zero, of  interest when $r \leq \demi$.  Let us observe that 
\begin{eqnarray*}
\|\nabla f (x(t))\| & \leq & \|\nabla f (x(t)) - \nabla f (x_{\varepsilon(t)})\| + \| \nabla f (x_{\varepsilon(t)})\| \\
 &\leq & L\|x(t) - x_{\varepsilon(t)}\| + \varepsilon(t) \| x_{\varepsilon(t)}\|  \\
 & = &  \mathcal{O}\left(\dfrac{1}{ t^{\frac{1-r}{2}}} + \dfrac{1}{t^r}\right). 
\end{eqnarray*}	
\end{remark}

\section{Passing from the first-order  to the second-order differential equation}\label{sect_second-order}
Having in view to develop rapid optimization methods, our study focuses on the second-order time evolution system obtained by applying the "time scaling and averaging" method of Attouch, Bot and Ngyuen \cite{ABotNguyen} to the dynamic \eqref{damped-id}.
In doing so, we hope to take advantage of both the fast convergence properties attached to the inertial gradient methods with vanishing damping coefficients and the strong convexity properties attached to Tikhonov's regularization.

\noindent So, let us make the change of time variable $s=\tau (t)$ in  the damped inertial dynamic 
\begin{equation}\label{damped-idz}
 \dot{z}(s) +  \nabla f (z(s)) + \varepsilon (s) z(s) =0,
\end{equation}
where we take $z$ as a state variable and $s$ as a time variable, which will end up with $x$ and $t$ after time  scaling and averaging.  The time scale $\tau (\cdot)$ is a $\mathcal C^2$  increasing function from $[t_0,+\infty[$ to $[s_0,+\infty[$, which satisfies $\lim_{t \to +\infty}\tau (t) = + \infty$.  Setting $s=\tau (t)$ and  multiplying (\ref{damped-idz}) by $\dot\tau(t)>0$, we obtain
\begin{eqnarray}
\dot\tau(t)\dot{z}(\tau(t)) +  \dot\tau(t)\Big(\nabla f (z(\tau(t))) + \varepsilon (\tau(t)) z(\tau(t))\Big) &=& 0.  \label{change var1}
\end{eqnarray}
Set $v(t):= z(\tau(t))$.
By the derivation chain rule, we have
$
\dot{v} (t)= \dot{\tau}(t) \dot{z}(\tau(t))
$.
So reformulating  (\ref{change var1}) in terms of $v(\cdot)$ and its derivatives, we obtain
\begin{equation}\label{1st-damped-id}
\dot{v} (t) +  \dot\tau(t)\nabla f (v(t)) + \dot\tau(t) \varepsilon (\tau(t)) v(t)=0.
\end{equation}
Let us now consider the averaging process which consists in passing from $v$ to $x$ which is defined by means of the first-order linear differential equation
\begin{equation}\label{1st-damped-idd}
v(t)=x(t) + \dot\tau(t)\dot x(t). 
\end{equation}
By temporal derivation of \eqref{1st-damped-idd} we get
\begin{equation}\label{1st-damped-iddd} 
\dot v(t) = \dot x(t) + \ddot\tau(t)\dot x(t) + \dot\tau(t)\ddot x(t).
\end{equation}
Replacing $v(t)$ and  $\dot v(t)$ in \eqref{1st-damped-id}, we get
\begin{equation}\label{1st-damped-id4}
\dot\tau(t) \ddot x(t) +(1+ \ddot\tau(t))\dot x(t) + \dot\tau(t)\nabla f\left[x(t)+\dot\tau(t)\dot x(t)\right] + \dot\tau(t) \varepsilon (\tau(t)) \left[x(t)+\dot\tau(t)\dot x(t)\right]=0.
\end{equation}
Dividing by $\dot\tau(t) >0$, we finally obtain
\begin{equation}\label{1st-damped-id505}
 \ddot x(t) +\frac{1+ \ddot\tau(t)}{\dot\tau(t)}\dot x(t) + \nabla f\left[x(t)+\dot\tau(t)\dot x(t)\right] + \varepsilon (\tau(t)) \left[x(t)+\dot\tau(t)\dot x(t)\right]=0.
\end{equation}
According to the Su, Boyd and Cand\`es \cite{SBC} model for the Nesterov accelerated gradient method, we consider the case where the viscous damping coefficient in \eqref{1st-damped-id505} satisfies $ \frac{1+ \ddot\tau(t)}{\dot\tau(t)} = \frac{\alpha}{t}$ for some $\alpha>1$. Thus $\tau(t) = \dfrac{t^2}{2(\alpha - 1)}$, and the system \eqref{1st-damped-id505} becomes
\begin{equation}\label{1st-damped-id55}
 \ddot x(t) +\frac{\alpha}{t}  \dot x(t) + \nabla f\left[x(t)+\dfrac{t}{\alpha - 1}\dot x(t)\right] + \varepsilon \left(\dfrac{t^2}{2(\alpha - 1)}\right) \left[x(t)+\dfrac{t}{\alpha - 1}\dot x(t)\right]=0.
\end{equation}
Equivalently
\begin{equation}\label{1st-damped-id555}
 \ddot x(t) + \left(  \frac{\alpha}{t} + \dfrac{t}{\alpha - 1}\varepsilon \left(\dfrac{t^2}{2(\alpha - 1)}\right) \right)  \dot x(t) + \nabla f\left[x(t)+\dfrac{t}{\alpha - 1}\dot x(t)\right] + \varepsilon \left(\dfrac{t^2}{2(\alpha - 1)}\right) x(t)=0.
\end{equation}
We can observe that the time dependent parameter $\varepsilon (\cdot)$ enters both the damping coefficient and the Tikhonov regularization term. 
In parallel to the study of the first-order evolution system studied in the previous section, we consider the two cases of the function $\varepsilon (\cdot)$ which is in our hands, namely $\varepsilon (t) =\frac{ \delta}{t}$ then $\varepsilon (t)=\frac1{t^r}$.
We give two different proofs of convergence results, each relying on a specific technique of independent interest.
\subsection{$\varepsilon (t)=\frac{\delta}{t}$ with $\delta>1$ }


The system \eqref{1st-damped-id55} becomes
\begin{equation}\label{2d-damped-id-p111}
 \ddot x(t) +  \frac{\alpha + 2\delta}{t} \dot x(t) + \nabla f\left[x(t)+\dfrac{t}{\alpha - 1}\dot x(t)\right] + \dfrac{2\delta (\alpha - 1)}{t^2} x(t)=0.
\end{equation}
Let us state the main convergence properties of this system.
\begin{theorem}\label{thm:model-inertial-b}
	Take    $\alpha >3$ and $\delta >1$, which gives $\alpha + 2\delta >5$.
	Let $x : [t_0, +\infty[ \to \mathcal{H}$ be a solution trajectory of
	\eqref{2d-damped-id-p111}.		
Then,  the following properties are satisfied. 	
	\begin{eqnarray}
	&&(i) \,\,  f(x(t))-\min_{\cH} f= \mathcal O\left( \frac{1}{t^{2}}  \right) \mbox{ as } \; t \to +\infty;\label{contr:fx(t)22}\\ 
	&& (ii) \,\, \mbox{ There is   strong convergence of } x(t)
	 \mbox{ to the minimum norm solution } x^*.\nonumber\\
	 &&(iii) \,\,
\| \nabla f(x(t)) \| = \mathcal O\left( \dfrac{1}{t} \right)  \mbox{ as } \; t \to +\infty.
	\end{eqnarray}
\end{theorem}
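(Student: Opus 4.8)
The plan is to prove all three conclusions without any fresh Lyapunov analysis for the second‑order system \eqref{2d-damped-id-p111}, instead transporting the rates already obtained for the first‑order dynamic. The point is that \eqref{2d-damped-id-p111} is exactly the image, under the time scaling $s=\tau(t)=\frac{t^2}{2(\alpha-1)}$ followed by averaging, of the Tikhonov‑regularized steepest descent \eqref{eqr1b} with $\varepsilon(\sigma)=\delta/\sigma$. Concretely, I would let $z(\cdot)$ solve \eqref{eqr1b}, set $v(t):=z(\tau(t))$, so that the averaging relation \eqref{1st-damped-idd} reads $v(t)=x(t)+\frac{t}{\alpha-1}\dot x(t)$. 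Solving this first‑order linear ODE for $x$ with the integrating factor $t^{\alpha-1}$ gives the representation
$$
x(t)=\left(\frac{t_0}{t}\right)^{\alpha-1} x(t_0)+\frac{\alpha-1}{t^{\alpha-1}}\int_{t_0}^t s^{\alpha-2} v(s)\, ds ,
$$
in which the weights $(t_0/t)^{\alpha-1}$ and $\frac{(\alpha-1)s^{\alpha-2}}{t^{\alpha-1}}\,ds$ are nonnegative and integrate to $1$. This convex averaging structure is what everything rests on.

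For $(i)$ I would first transport the value rate: Theorem \ref{thm:model-ab} gives $f(z(\sigma))-\min_{\cH} f=\mathcal O(1/\sigma)$, hence after the time change $f(v(s))-\min_{\cH} f=\mathcal O(1/\tau(s))=\mathcal O(1/s^{2})$. Applying Jensen's inequality to the convex combination above and subtracting $\min_{\cH} f$ yields
$$
f(x(t))-\min_{\cH} f\le \left(\frac{t_0}{t}\right)^{\alpha-1}\!\big(f(x(t_0))-\min_{\cH} f\big)+\frac{\alpha-1}{t^{\alpha-1}}\int_{t_0}^t s^{\alpha-2}\big(f(v(s))-\min_{\cH} f\big)\, ds .
$$
The decisive estimate is $\int_{t_0}^t s^{\alpha-2}\cdot s^{-2}\,ds=\int_{t_0}^t s^{\alpha-4}\,ds=\mathcal O(t^{\alpha-3})$, which holds precisely because $\alpha>3$; multiplying by $\frac{\alpha-1}{t^{\alpha-1}}$ produces the $\mathcal O(1/t^2)$ rate, while the initial term is $\mathcal O(1/t^{\alpha-1})=o(1/t^2)$. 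This single use of Jensen is the crux.

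For $(iii)$ I would feed the rate from $(i)$ into the extended descent lemma (Lemma \ref{ext_descent_lemma}), which for $L$‑smooth convex $f$ gives $\frac{1}{2L}\|\nabla f(x)\|^{2}\le f(x)-\min_{\cH} f$; since the trajectory is bounded by $(ii)$, one may take $L$ to be the Lipschitz constant of $\nabla f$ on a ball containing it, whence $\|\nabla f(x(t))\|^{2}=\mathcal O(1/t^2)$. For $(ii)$ I would use that $z(\sigma)\to x^*$ strongly (Theorem \ref{thm:model-ab}), so $v(t)=z(\tau(t))\to x^*$; the averaging representation then gives $x(t)\to x^*$ by a Cesàro/Toeplitz argument: writing $v(s)=x^*+\rho(s)$ with $\rho(s)\to 0$, the constant part reproduces $x^*$ up to a vanishing factor, while the weighted integral of $\rho$ is split at a large threshold to force it to zero.

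The main obstacle I anticipate is not analytic depth but the correct setup of the convex averaging structure: verifying that the weights integrate to exactly $1$ so that Jensen is legitimate, and checking that $\alpha>3$ is precisely the threshold making $\int s^{\alpha-4}\,ds$ grow like $t^{\alpha-3}$ rather than logarithmically (the borderline $\alpha=3$ would cost a $\log t$ factor). Once this is in place, the three conclusions follow without a second Lyapunov function, which is exactly the benefit of the time‑scaling‑and‑averaging approach.
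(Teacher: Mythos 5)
Your proposal is correct and follows essentially the same route as the paper: the integrating-factor representation of $x$ as a probability average of $v=z\circ\tau$, Jensen's inequality with the transported rate $f(v(s))-\min_{\cH}f=\mathcal O(1/s^2)$ and the threshold computation $\int_{t_0}^t s^{\alpha-4}\,ds=\mathcal O(t^{\alpha-3})$ for $\alpha>3$, the split-integral C\'esaro argument for strong convergence (which the paper defers to the proof of Theorem~\ref{thm:model-inertial} but is identical to yours), and Lemma~\ref{ext_descent_lemma} on a ball containing the bounded trajectory for the gradient rate. No gaps; even your remark about the borderline $\alpha=3$ costing a logarithmic factor matches the paper's case discussion.
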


\begin{proof}
According to Theorem \ref{thm:model-ab}, the rescaled function $v(t):= z(\tau(t))$ satisfies
	\begin{eqnarray}
	&&
	f(v(t))-\min_{\cH} f= \mathcal O\left( \frac{1}{t^{2}}  \right)  \mbox{ as } \; t \to +\infty;\label{contr:fv(t)20}
	\end{eqnarray}
	Our objective  is now to obtain the corresponding convergence rate of the values $f(x(t))-\min_{\cH} f$ as $t \to +\infty$, and the strong convergence of 
 $x(t)$ to $x^*$   the minimum norm element of $S$.
 The following proof is inspired by  Attouch, Bot, Ngyuen \cite{ABotNguyen}. It highlights the averaging interpretation of the passage from $v$ to $x$.

$(i)$  Let us rewrite \eqref{1st-damped-idd}  as
\begin{equation}\label{change var28}
	t \dot{x}(t) + (\alpha -1) x(t) = (\alpha -1) v(t).
\end{equation}

\noindent After multiplication of  \eqref{change var28} by $t^{\alpha -2}$, we get equivalently
\begin{equation}\label{change var29}
	t^{\alpha -1} \dot{x}(t) + (\alpha -1)t^{\alpha -2} x(t) = (\alpha -1)t^{\alpha -2} v(t),
\end{equation}
that is
\begin{equation}\label{change var30}
	\frac{d}{dt} \left( t^{\alpha -1}x(t)\right)  = (\alpha -1)t^{\alpha -2} v(t).
\end{equation}
By integrating \eqref{change var30} from $t_{0}$ to $t$, and according to  $ x(t_{0})=x_0$, we obtain
\begin{eqnarray}
	x(t) &=&  \frac{t_{0}^{\alpha -1}}{ t^{\alpha -1}} v(t_0) + \frac{\alpha -1}{t^{\alpha -1}}\int_{t_{0}}^t \theta^{\alpha -2} v(\theta)d\theta \label{def:x}
\end{eqnarray}
where for simplicity we take $v(s_0)=x_0$.  
Then, observe that $x(t)$ can be simply written as follows
\begin{equation}\label{proba-formulation}
	x(t) =   \int_{t_{0}}^t v(\theta)\,  d\mu_{t} (\theta) 
\end{equation}
where $\mu_t$ is the positive  Radon  measure on $[t_{0}, t]$ defined by
$$
\mu_t = \frac{t_{0}^{\alpha -1}}{ t^{\alpha -1}} \delta_{t_{0}} +  (\alpha -1) \frac{\theta^{\alpha -2}}{t^{\alpha -1}} d\theta .
$$
We are therefore led to examining the convergence rate of  $f\left(\int_{t_{0}}^t v(\theta)  d\mu_{t} (\theta)\right) -\min_{\cH} f$ towards zero as $t \to + \infty$.
We have  that $\mu_t$ is a positive Radon measure on $[t_{0}, t]$  whose total mass is equal to $1$. It is therefore a probability measure, and $\int_{t_{0}}^t v(\theta)\,  d\mu_{t} (\theta)$ is obtained by \textbf{averaging} the trajectory $v(\cdot)$ on $[t_{0},t]$ with respect to  $\mu_t$.
From there, we can deduce fast convergence properties of $x(\cdot)$.
According to the convexity of $f$, and  Jensen's inequality, we obtain that
\begin{eqnarray*}
	f\left(\int_{t_{0}}^t v(\theta)\,  d\mu_{t} (\theta)\right)  -\min_{\cH} f &=& (f -\min_{\cH} f ) \left( \int_{t_{0}}^t v(\theta)  d\mu_t (\theta)\right)\\
	&\leq& \int_{t_{0}}^t  \left( f (v(\theta)) -\min_{\cH} f \right) d\mu_t (\theta)
	\\
	&\leq& L_f \int_{t_{0}}^t  \frac{1}{\theta^2} d\mu_t (\theta) \quad \forall t \geq t_0,
\end{eqnarray*}
where the last  inequality above comes from \eqref{contr:fv(t)20}.
According to the definition of $\mu_t$, it yields
\begin{eqnarray*}
	f\left( \int_{t_{0}}^t v(\theta)\,  d\mu_{t} (\theta)\right)  -\min_{\cH} f 
	&\leq&  \frac{L_f t_{0}^{\alpha -3}}{ t^{\alpha -1}}  +    L_f(\alpha -1) \frac{1}{t^{\alpha -1}}  \int_{t_{0}}^t  \theta^{\alpha -4} d\theta\\
	&\leq&\frac{L_f t_{0}^{\alpha -3}}{ t^{\alpha -1}} + L_f \frac{ \alpha-1}{\alpha-3}\left( \frac{1 }{ t^2} - \frac{t_{0}^{\alpha -3}}{t^{\alpha -1}}\right)
	 \quad \forall t \geq t_0.
\end{eqnarray*}
According to  \eqref{proba-formulation} we get 

\medskip

\noindent $\bullet$ For $1 <\alpha <3$, 
$$f(x(t)) -\min_{\cH} f \leq \frac{C}{t^{\alpha -1}}.$$
$\bullet$ For $\alpha >3$, we have $\dfrac{t_0^{\alpha-3}}{ t^{\alpha -1}} \leq \dfrac{1 }{ t^2}$  for every $t \geq t_0$. We therefore obtain 
$$
 f(x(t))-\min_{\cH} f= \mathcal O\left( \frac{1}{t^{2}}  \right) \mbox{ as } \; t \to +\infty
 $$

$(ii)$ According to Theorem \ref{thm:model-ab}, the rescaled function $v(\cdot )$  strongly converges to the minimum norm solution. From  the interpretation of $x$ as an average of $v$, and  using the fact that convergence implies ergodic convergence, we deduce the strong convergence of $x(t)$ towards the minimum norm solution.
This argument is detailed in the next paragraph, so we omit the details here.

\medskip

$(iii)$ We have obtained in $ii)$ that the trajectory $x(\cdot)$ converges. Therefore, it is bounded. Let $L>0$ be the Lipschitz constant of   $\nabla f$ on a ball that contains the trajectory  $x(\cdot)$.   According to the convergence rate of values  and Lemma \ref{ext_descent_lemma} in the Appendix, we immediately obtain the following  convergence rate of the gradients towards zero
\vspace{-7pt}
\begin{equation*}
	\dfrac{1}{2L} \left\lVert \nabla f(x(t)) \right\rVert ^{2} \leq f(x(t)) -\min_{\cH} f =\mathcal O\left( \frac{1}{t^2} \right).
\end{equation*}
So,
$$ \|\nabla f (x(t))\|=\mathcal{O}\left(\dfrac{1}{t} \right) \mbox{ as } \; t \to +\infty.
$$
This completes the proof. \qed
\end{proof}

\subsection{$\varepsilon (t)=\frac1{t^r}$ for $0<r<1$ }

The system \eqref{1st-damped-id55} becomes
\begin{equation}\label{2d-damped-id-p1}
 \ddot x(t) +  \frac{\alpha}{t}\dot x(t) + \nabla f\left[x(t)+\dfrac{s}{\alpha - 1}\dot x(t)\right] + \left(\dfrac{2(\alpha - 1)}{t^2}\right)^{r} \left[x(t)+\dfrac{t}{\alpha - 1}\dot x(t)\right]=0.
\end{equation}
According to Theorem \ref{thm:model-a}, the rescaled function $v(t):= z(\tau(t))$ satisfies
	\begin{eqnarray}
	&&
	f(v(t))-\min_{\cH} f= \mathcal O \left( \displaystyle\frac{1}{\tau(t)^{r} }   \right) \mbox{ as } \; t \to +\infty;\label{contr:fv(t)2}\\
	&& \|v(t) -x_{\varepsilon(\tau(t))}\|^2=\mathcal{O}\left(\dfrac{1}{ \tau(t)^{1-r}}\right) \mbox{ as } \; t \to +\infty. \label{contr:v(t)2}
	\end{eqnarray}
	Our objective  is now to obtain the corresponding convergence rate of the values $f(x(t))-\min_{\cH} f$ as $t \to +\infty$, and strong convergence of 
 $x(t)$ to $x^*$   the minimum norm element of $S$.

\begin{theorem}\label{thm:model-inertial}
	Take   $0<r< 1$ and $\alpha >1$.
	Let $x : [t_0, +\infty[ \to \mathcal{H}$ be a solution trajectory of
\begin{equation}\label{2d-damped-id-p1-t}
\ddot x(t) + \left( \frac{\alpha}{t}  +\dfrac{2^r (\alpha - 1)^{r-1}}{t^{2r-1}}  \right)  \dot x(t) + \nabla f\left(x(t)+\dfrac{t}{\alpha - 1}\dot x(t)\right) + 
\dfrac{2^r(\alpha - 1)^r}{t^{2r}} x(t)=0.
\end{equation}		
Then,  the following properties are satisfied. 	
	\begin{eqnarray}
	&&(i) \,\,  f(x(t))-\min_{\cH} f= \mathcal O\left( \frac{1}{t^{\alpha - 1}} + \frac{1}{t^{2r}} \right) \mbox{ as } \; t \to +\infty;\label{contr:fx(t)2}\\
	&& (ii)  \, \,\mbox{ There is   strong convergence of } x(t) \mbox{ to the minimum norm solution } x^* ; \\
	&& (iii) \,\, \| \nabla f(x(t)) \|^2 = \mathcal O\left( \frac{1}{t^{\alpha - 1}} + \frac{1}{t^{2r}} \right) \mbox{ as } \; t \to +\infty. \label{contr:x(t)2}
	\end{eqnarray}
\end{theorem}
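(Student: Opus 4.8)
The plan is to follow the averaging strategy used in the proof of Theorem \ref{thm:model-inertial-b}, now feeding in the first-order rates of Theorem \ref{thm:model-a} with $\varepsilon(t)=1/t^r$. Since the system \eqref{2d-damped-id-p1-t} is obtained from \eqref{damped-id} by time scaling with $\tau(t)=\frac{t^2}{2(\alpha-1)}$ followed by averaging, the rescaled trajectory $v(t):=z(\tau(t))$ satisfies \eqref{contr:fv(t)2}--\eqref{contr:v(t)2}; in particular, substituting $\tau(t)=\frac{t^2}{2(\alpha-1)}$ turns these into $f(v(t))-\min_{\cH} f=\mathcal O(1/t^{2r})$ and strong convergence of $v(t)$ to $x^*$. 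The averaging identity \eqref{1st-damped-idd}, which here reads $v(t)=x(t)+\frac{t}{\alpha-1}\dot x(t)$, can be rewritten as $t\dot x(t)+(\alpha-1)x(t)=(\alpha-1)v(t)$; multiplying by $t^{\alpha-2}$ and integrating from $t_0$ to $t$ gives, exactly as in \eqref{def:x}--\eqref{proba-formulation}, the representation $x(t)=\int_{t_0}^t v(\theta)\,d\mu_t(\theta)$, where $\mu_t=\frac{t_0^{\alpha-1}}{t^{\alpha-1}}\delta_{t_0}+(\alpha-1)\frac{\theta^{\alpha-2}}{t^{\alpha-1}}\,d\theta$ is a probability measure on $[t_0,t]$.

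For part $(i)$ I would apply Jensen's inequality to the convex function $f-\min_{\cH} f$ against the probability measure $\mu_t$, obtaining
\[
f(x(t))-\min_{\cH} f\le \int_{t_0}^t\big(f(v(\theta))-\min_{\cH} f\big)\,d\mu_t(\theta)\le C\int_{t_0}^t\frac{1}{\theta^{2r}}\,d\mu_t(\theta),
\]
and then evaluate the right-hand side explicitly. The atomic part contributes $\frac{t_0^{\alpha-1-2r}}{t^{\alpha-1}}=\mathcal O(1/t^{\alpha-1})$, while the absolutely continuous part reduces to $(\alpha-1)t^{-(\alpha-1)}\int_{t_0}^t\theta^{\alpha-2-2r}\,d\theta$. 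The key technical point is the case analysis on the sign of $\alpha-1-2r$: when $\alpha-1-2r>0$ the integral grows like $t^{\alpha-1-2r}$ and the term is $\mathcal O(1/t^{2r})$, while when $\alpha-1-2r<0$ the integral converges and the term is $\mathcal O(1/t^{\alpha-1})$; in both regimes the sum is dominated by $\mathcal O(1/t^{\alpha-1}+1/t^{2r})$, as claimed (the borderline $\alpha-1=2r$ only adds a logarithmic factor, which is harmless here).

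For part $(ii)$ I would supply the ergodic-convergence argument that was deferred in Theorem \ref{thm:model-inertial-b}. Since $\mu_t$ is a probability measure, $x(t)-x^*=\int_{t_0}^t(v(\theta)-x^*)\,d\mu_t(\theta)$. Given $\eta>0$, choose $T$ with $\|v(\theta)-x^*\|\le\eta$ for $\theta\ge T$; the tail integral is then bounded by $\eta$, while on $[t_0,T]$ the bounded integrand is weighted by $\mu_t([t_0,T])=(T/t)^{\alpha-1}\to0$, so the head integral vanishes as $t\to+\infty$. Hence $\|x(t)-x^*\|\le 2\eta$ eventually, which gives strong convergence. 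For part $(iii)$, part $(ii)$ makes $x(\cdot)$ bounded, so $\nabla f$ is Lipschitz with some constant $L$ on a ball containing the trajectory; Lemma \ref{ext_descent_lemma} then yields $\frac{1}{2L}\|\nabla f(x(t))\|^2\le f(x(t))-\min_{\cH} f$, and part $(i)$ gives the stated rate.

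The main obstacle I anticipate is part $(i)$: one must first be careful that the scaling $\tau(t)\sim t^2$ converts the $1/\tau^{r}$ value rate for $v$ into the $1/t^{2r}$ rate \emph{before} averaging, and then carry out the integral estimate with its case split on $\alpha-1-2r$ cleanly, since this split is precisely what produces the two competing terms $1/t^{\alpha-1}$ (memory of the initial data through the averaging) and $1/t^{2r}$ (the intrinsic Tikhonov rate). The ergodic argument in $(ii)$ is conceptually routine but should be written with care, because $v$ is only known to converge strongly, so the split-and-estimate at a threshold $T$ is the appropriate tool.
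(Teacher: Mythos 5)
Your proof is correct, and parts (ii) and (iii) coincide exactly with the paper's own arguments: the threshold-splitting ergodic estimate for $\frac{\alpha-1}{t^{\alpha-1}}\int_{s_0}^t \theta^{\alpha-2}\|v(\theta)-x^*\|\,d\theta$, and then Lemma \ref{ext_descent_lemma} combined with the rate from (i). In part (i), however, you take a genuinely different route from the paper, which announces for this section ``two different proofs, each relying on a specific technique of independent interest.'' For this theorem the paper works \emph{incrementally}: it integrates \eqref{eq:Eq-dif1} over $[t,t+h]$, writes $x(t+h)$ as a convex combination of $x(t)$ and an average of $v$ over $[t,t+h]$, applies Jensen to that combination, and lets $h\to 0^+$ to obtain the differential inequality $\frac{d}{dt}\Gamma(t)\le 0$ for $\Gamma(t)=t^{\alpha-1}\bigl(f(x(t))-\min_{\cH}f\bigr)-2^rC(\alpha-1)^{r+1}\int_{s_0}^{t}s^{\alpha-2r-2}\,ds$, from which \eqref{ineq-estim} follows by the monotonicity of $\Gamma$. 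You instead transplant the \emph{global} averaging representation $x(t)=\int_{t_0}^t v(\theta)\,d\mu_t(\theta)$ against the probability measure $\mu_t$ --- the method the paper deploys only for $\varepsilon(t)=\delta/t$ in Theorem \ref{thm:model-inertial-b} --- and apply Jensen once, evaluating $\int_{t_0}^t\theta^{-2r}\,d\mu_t(\theta)$ with the case split on the sign of $\alpha-1-2r$. Both routes rest on the same two pillars (Theorem \ref{thm:model-a} transported through $\tau(t)=t^2/(2(\alpha-1))$, plus Jensen) and are interchangeable here; yours is more direct and makes transparent the origin of the two competing rates (the atom of $\mu_t$ at $t_0$ produces the memory term $1/t^{\alpha-1}$, the Tikhonov rate of $v$ produces $1/t^{2r}$), while the paper's differential-inequality form needs no convention identifying $v(t_0)$ with $x(t_0)$ and packages all initial data into the single constant $\Gamma(t_0)$. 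Two small points of care for your write-up: the rate \eqref{contr:fv(t)2} is only asymptotic, so on a compact initial segment $[t_0,t_1]$ you should bound $f(v(\theta))-\min_{\cH}f$ by a constant (this contributes another harmless $O(t^{-(\alpha-1)})$ term); and the borderline $\alpha-1=2r$, where your estimate picks up a factor $\log t$, is equally present --- though unremarked --- in the paper's own computation of $\int_{s_0}^t s^{\alpha-2r-2}\,ds$, so your explicit acknowledgment of it is a point in your favor rather than a defect.
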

\begin{proof}

$(i)$ \textit{Convergence rates to zero for the values $f(x(t))-\min_\cH f$}.
By definition of $x(\cdot)$
$$
v(s)=x(s)+\dfrac{s}{\alpha - 1}\dot x(s) =\frac1{(\alpha - 1)s^{\alpha - 2}}\frac{d}{ds}(s^{\alpha - 1}x(s)) .
$$
Equivalently
\begin{equation}\label{eq:Eq-dif1}
\frac{d}{ds}(s^{\alpha - 1}x(s)) = ({\alpha - 1})s^{\alpha - 2}v(s).
\end{equation}
From this it is easy to verify that $x$ can be interpreted as an average of $v$.
After integration from $t$ to $t+h$ of \eqref{eq:Eq-dif1}  and division by $(t+h)^{\alpha - 1}$, we get
\begin{eqnarray*}
x(t+h) &= & \left(\frac{t}{t+h}\right)^{\alpha - 1}x(t)+ \frac{1}{(t+h)^{\alpha - 1}}\int_t^{t+h}({\alpha - 1})s^{\alpha - 2}v(s)ds\\
	  &= & \left(\frac{t}{t+h}\right)^{\alpha - 1}x(t)+ \left(1-\left(\frac{t}{t+h}\right)^{\alpha - 1}\right)\frac{1}{(t+h)^{\alpha - 1}-t^{\alpha - 1}} \int_t^{t+h}({\alpha - 1})s^{\alpha - 2}v(s)ds\\
	  &= & \left(\frac{t}{t+h}\right)^{\alpha - 1}x(t)+ \left(1-\left(\frac{t}{t+h}\right)^{\alpha - 1}\right)\frac{1}{(t+h)^{\alpha - 1}-t^{\alpha - 1}} \int_{t^{\alpha - 1}}^{(t+h)^{\alpha - 1}} v\left(\theta^{1/(\alpha - 1)}\right)d\theta,
\end{eqnarray*}
where the last equality comes from the change of time variable $\theta=s^{\alpha - 1}$.

\noindent According to the convexity of the   function $F=f - \inf_{\cH}f$, and using  Jensen's inequality, we obtain
\begin{small}
\begin{eqnarray*}
F(x(t+h)) & \leq &  \left(\frac{t}{t+h}\right)^{\alpha - 1}F(x(t))+ \left(1-\left(\frac{t}{t+h}\right)^{\alpha - 1}\right)F\left(\frac{1}{(t+h)^{\alpha - 1}-t^{\alpha - 1}} \int_{t^{\alpha - 1}}^{(t+h)^{\alpha - 1}} v\left(\theta^{1/(\alpha - 1)}\right)d\theta \right)\\
	& \leq &  \left(\frac{t}{t+h}\right)^{\alpha - 1}F(x(t))+ \left(1-\left(\frac{t}{t+h}\right)^{\alpha - 1}\right)\frac{1}{(t+h)^{\alpha - 1}-t^{\alpha - 1}}  \int_{t^{\alpha - 1}}^{(t+h)^{\alpha - 1}} F\left( v\left(\theta^{1/(\alpha - 1)}\right)\right)d\theta  .
\end{eqnarray*}
\end{small}
Using again the change of time variable $s=\theta^{1/(\alpha - 1)}$, we get
$$
\int_{t^{\alpha - 1}}^{(t+h)^{\alpha - 1}}  F\left( v\left(\theta^{1/(\alpha - 1)}\right)\right)d\theta   =  \int_t^{t+h}({\alpha - 1})s^{\alpha - 2}F(v(s))ds .
$$
It follows
\begin{small}
\begin{eqnarray*}
F(x(t+h))  &\leq&  \left(\frac{t}{t+h}\right)^{\alpha - 1}F(x(t))+ \left(1-\left(\frac{t}{t+h}\right)^{\alpha - 1}\right)\frac{1}{(t+h)^{\alpha - 1}-t^{\alpha - 1}}  \int_{t}^{t+h} ({\alpha - 1})s^{\alpha - 2}F(v(s))ds .
\end{eqnarray*}
\end{small}
Using  \eqref{contr:fv(t)2}, with  $\tau(s) = \dfrac{s^2}{2(\alpha - 1)}$, we obtain the existence of a constant $C>0$ such that
\begin{small}
\begin{eqnarray*}
F(x(t+h)) &\leq&   \left(\frac{t}{t+h}\right)^{\alpha - 1}F(x(t))+ \left(1-\left(\frac{t}{t+h}\right)^{\alpha - 1}\right)\frac{C(\alpha - 1)}{(t+h)^{\alpha - 1}-t^{\alpha - 1}}\int_t^{t+h} \displaystyle s^{\alpha - 2}\left(\dfrac{2(\alpha - 1)}{s^2}\right)^{r}  ds . 
\end{eqnarray*}
\end{small}
Therefore, multiplying by $\frac1h(t+h)^{\alpha - 1}$ and rearranging the terms of this inequality, we conclude
\begin{eqnarray*}
\frac1h\left( (t+h)^{\alpha - 1}F(x(t+h)) -    t^{\alpha - 1}F(x(t))\right) \leq 2^rC(\alpha - 1)^{r+1}\frac1h  \int_t^{t+h} \displaystyle \dfrac{ds}{s^{2r-\alpha +2}}  ds . 
\end{eqnarray*}
By letting $h \rightarrow 0^+$ in the above inequality, we first get  that $\frac{d}{dt}\Gamma (t) \leq 0,$ where
$$
 \Gamma (t):=t^{\alpha - 1}F(x(t)) - 2^rC(\alpha - 1)^{r+1}\int_{s_0}^{t} \displaystyle \dfrac{ds}{s^{2r-\alpha +2}}  ds.
$$
This  implies $\Gamma (t)$ is nonincreasing. Therefore  for each $0<r<1$ and each $t\geq s_0$, we have $\Gamma (t) \leq \Gamma (t_0)$, which gives
$$
t^{\alpha - 1}F(x(t))  \leq \Gamma (t_0) + 2^rC(\alpha - 1)^{r+1}\int_{s_0}^{t} \displaystyle \dfrac{ds}{s^{2r-\alpha +2}}  ds.
$$ 
Consequently,
\begin{equation}\label{ineq-estim}
f (x(t))- \inf_{\cH}f=F(x(t))\leq  \frac{\Gamma (t_0)}{t^{\alpha - 1}} +C_1\frac1{t^{\alpha - 1}} t^{\alpha -2r - 1} .
\end{equation}
We conclude that, as $t \to +\infty$
$$
f (x(t))- \inf_{\cH}f= \mathcal O\left( \frac{1}{t^{\alpha - 1}} + \frac{1}{t^{2r}} \right)
$$ 
As a consequence
\begin{itemize}
\item When $\alpha \geq 3$ we have
$$
f (x(t))- \inf_{\cH}f= \mathcal O\left(  \frac{1}{t^{2r}} \right)
$$
So we can get as close as possible to the optimal convergence rate $1/t^2$.

\item When $\alpha < 3$, by taking $ \frac{\alpha -1}{2}  <r<1$, we get
$$
f (x(t))- \inf_{\cH}f= \mathcal O\left(  \frac{1}{t^{\alpha - 1}} \right).
$$
\end{itemize}
%

\noindent $(ii)$ \textit{ Strong convergence of $x(t)$ to the minimum norm solution}.

Let us go back to \eqref{eq:Eq-dif1}. We have
$$
\frac{d}{ds}(s^{\alpha - 1}x(s)) = ({\alpha - 1})s^{\alpha - 2}v(s).
$$
Integrating from $s_0$ to $t$, we get
\begin{eqnarray*}
x(t) &=& \left(\dfrac{s_0}{t}\right)^{\alpha - 1}x(s_0) + \frac{{\alpha - 1}}{t^{\alpha - 1}}\int_{s_0}^t \theta^{\alpha - 2}v(\theta)d\theta \\
&=&  \left(\dfrac{s_0}{t}\right)^{\alpha - 1}x(s_0) + \frac{{\alpha - 1}}{t^{\alpha - 1}}\int_{s_0}^t \theta^{\alpha - 2}(v(\theta)-x^*)d\theta + \left(\frac{{\alpha - 1}}{t^{\alpha - 1}}\int_{s_0}^t \theta^{\alpha - 2}d\theta\right) x^*\\
&=&  \left(\dfrac{s_0}{t}\right)^{\alpha - 1}x(s_0) + \frac{{\alpha - 1}}{t^{\alpha - 1}}\int_{s_0}^t \theta^{\alpha - 2}(v(\theta)-x^*)d\theta + \left(1-\left(\frac{s_0}{t}\right)^{\alpha - 1}\right) x^*\\
\end{eqnarray*}
Therefore,
\begin{eqnarray}\label{eq_cont-us2}
\|x(t) - x^*\| &\leq&   \left(\dfrac{s_0}{t}\right)^{\alpha - 1}|x(s_0) - x^*\| + \frac{{\alpha - 1}}{t^{\alpha - 1}}\int_{s_0}^t \theta^{\alpha - 2}\|v(\theta)-x^*\|d\theta.
\end{eqnarray}
By Theorem \ref{thm:model-a}, we have, as $\theta$ goes to $+\infty$,    strong convergence of $v(\theta)=z(\tau(\theta))$ to the minimum norm solution $x^*$. \\
Since $\lim_{s\rightarrow+\infty}\|v(\theta)-x^*\|=0$, then given $a > 0$, there exists $s_a>0$ sufficiently large so that $\|v(\theta)-x^*\|<a$, for $\theta \geq s_a$.

Then, for $t> s_a$, split the integral $\int_{s_0}^t \theta^{\alpha - 2}\|v(\theta)-x^*\|d\theta$ into two parts to obtain
\begin{eqnarray*}
\frac{\alpha - 1}{t^{\alpha - 1}}\int_{s_0}^t \theta^{\alpha - 2}\|v(\theta)-x^*\|d\theta &=& \frac{\alpha - 1}{t^{\alpha - 1}}\int_{s_0}^{s_a} \theta^{\alpha - 2}\|v(\theta)-x^*\|d\theta + \frac{\alpha - 1}{s^{\alpha - 1}}\int_{s_a}^t \theta^{\alpha - 2}\|v(\theta)-x^*\|d\theta\\
&\leq& \frac{\alpha - 1}{t^{\alpha - 1}}\int_{s_0}^{s_a} \theta^{\alpha - 2}\|v(\theta)-x^*\|d\theta + \frac{a(\alpha - 1)}{t^{\alpha - 1}}\int_{s_a}^t \theta^{\alpha - 2}d\theta\\
&=& a + \frac{1}{t^{\alpha - 1}} \left((\alpha - 1)\int_{s_0}^{s_a} \theta^{\alpha - 2}\|v(\theta)-x^*\|d\theta - as_a^{\alpha - 1}\right).
\end{eqnarray*}
Now let $t\rightarrow+\infty$ to deduce that
$$
\limsup_{t\rightarrow+\infty}\frac{\alpha - 1}{t^{\alpha - 1}}\int_{s_0}^t \theta^{\alpha - 2}\|v(\theta)-x^*\|d\theta \leq a.
$$
Since this is true for any $a > 0$, this insures 
$$
\lim_{t\rightarrow+\infty}\frac{\alpha - 1}{t^{\alpha - 1}}\int_{s_0}^t \theta^{\alpha - 2}\|v(\theta)-x^*\|d\theta=0.
$$
Going back to \eqref{eq_cont-us2}, we conclude for $\alpha - 1>0$ that $\lim_{t\rightarrow+\infty}\|x(t) - x^*\|=0$. This means that $x(t)$ strongly  converges to $x^*$ as $t\rightarrow +\infty$.

\medskip

$iii)$\textit{ Convergence of the gradients towards zero.} We have obtained in $ii)$ that the trajectory $x(\cdot)$ converges. Therefore, it is bounded. Let $L>0$ be the Lipschitz constant of   $\nabla f$ on a ball that contains the trajectory  $x(\cdot)$.   According to the convergence rate of values  and Lemma \ref{ext_descent_lemma} in the Appendix, we immediately obtain the following  convergence rate of the gradients towards zero
\vspace{-7pt}
\begin{equation*}
	\dfrac{1}{2L} \left\lVert \nabla f(x(t)) \right\rVert ^{2} \leq f(x(t)) -\min_{\cH} f =\mathcal O\left( \frac{1}{t^{\alpha - 1}} + \frac{1}{t^{2r}} \right).
\end{equation*}	
This completes the proof. \qed
\end{proof}

\section{Nonsmooth case}\label{sec:nonsmooth}
In this section, we adapt the  approach of \cite{ABotNguyen} to our context.
The pair of variables $(v,x)$ defined in section \ref{sect_second-order} satisfies the following differential system, which only involves first order derivatives in time and space
\begin{equation}\label{eq:fos2}
	\begin{cases}
		\dot{v} (t) +  \dfrac{t}{\alpha -1}\nabla f (v(t)) + \dfrac{t}{\alpha -1} \varepsilon \left(\dfrac{t^2}{2(\alpha -1)}\right) v(t)  & = 0
		\vspace{2mm}\\
		\dot x(t)+ \dfrac{\alpha -1}{t} (x(t) -v(t)) & = 0.
	\end{cases}
\end{equation}
\noindent This differential system naturally extends to the non-smooth case, replacing the gradient of $f$ by its subdifferential.
Given $f:\cH \to \rinf$ a convex, lower semicontinuous and proper function, this leads to consider the system of differential equation/inclusion
\begin{equation}\label{eq:fos2b}
	\begin{cases}
		\dot{v} (t) +  \dfrac{t}{\alpha -1}\partial f (v(t)) + \dfrac{t}{\alpha -1} \varepsilon \left(\dfrac{t^2}{2(\alpha -1)}\right) v(t)  & \ni  0
		\vspace{2mm}\\
		\dot x(t)+ \dfrac{\alpha -1}{t} (x(t) -v(t)) & = 0.
	\end{cases}
\end{equation}
Solving this system gives generalized solutions to the second-order differential inclusion
\begin{equation}\label{eq:fos2b2}
	\ddot x(t) +\frac{\alpha}{t}  \dot x(t) + \partial f\left[x(t)+\dfrac{t}{\alpha - 1}\dot x(t)\right] + \varepsilon \left(\dfrac{t^2}{2(\alpha - 1)}\right) \left[x(t)+\dfrac{t}{\alpha - 1}\dot x(ts)\right]\ni0,
\end{equation}
whose direct study raises several difficulties.
To extend the results  of the previous section to this nonsmooth case, we need to avoid the arguments using the Lipschitz continuity of $\nabla f$.
So, we are led to consider the Cauchy problem for \eqref{eq:fos2b2} with 
initial data $x_0 \in \dom f$ and $x_1=0$, that is with initial velocity equal to zero.
So doing we have $v(t_0)= x(t_0)=x_0$, which allows to interpret $x$ as an average of $v$.

Then, the  existence and uniqueness of a strong solution to the associated Cauchy problem relies on the equivalent formulation of \eqref{eq:fos2b} as a perturbation of the  generalized steepest descent dynamical system in the product space $\cH \times \cH$. Precisely, define $F:  \cH \times \cH \to \rinf$, which is given  for any $Z=(v,x) \in \cH \times \cH$ by
$$F(Z) = f(v),$$
and let  $G: [t_0, +\infty[ \times \cH \times \cH \rightarrow \cH \times \cH $ be the operator defined by
\begin{equation}
	G(t,Z)= \left(\dfrac{t}{\alpha -1} \varepsilon \left(\dfrac{t^2}{2(\alpha -1)}\right) v ,  \dfrac{\alpha -1}{t} (x -v) \right).
\end{equation}
Then \eqref{eq:fos2b} is written equivalently as
\begin{equation}
	\dot{Z}(t) + \frac{t}{\alpha -1}\partial F (Z(t)) +  G(t, Z(t))\ni 0.
\end{equation}
The initial condition becomes $Z(t_0)= (x_0,x_0)$ which belongs to 
$\dom F= \dom f \times \cH$.
According to the classical results concerning the Lipschitz perturbation of evolution equations governed by subdifferentials of convex functions,  see  \cite[Proposition 3.12]{Bre1}, we obtain the existence and uniqueness of a global strong solution of the Cauchy problem associated with \eqref{eq:fos2b}.
As a major advantage of the time scaling and averaging techniques, the  arguments used in the previous section still work in this more general nonsmooth situation. The rules of differential calculus are still valid for strong solutions, see \cite[chapter VIII.2]{Bre2}, and Jensen's inequality is still valid for a nonsmooth function $f$. Indeed Jensen's inequality is classical for a smooth convex function $f$. Its extension to the nonsmooth convex case can be obtained by first writing it for the Moreau-Yosida regularization $f_{\lambda}$ of $f$, then passing to the limit when $\lambda  \downarrow 0$. According to the monotone convergence of   $f_{\lambda}$ towards $f$, we can pass to the limit in the integral term thanks to the Beppo-Levy monotone convergence theorem.
We so obtain the following theorems corresponding 
to the cases $\varepsilon (t)=\frac{\delta}{t}$, and $\varepsilon (t)=\frac{1}{t^r}$.
\begin{theorem}\label{thm:model-inertial-b-nonsmooth}
Let $f:\cH \to \rinf$ be a convex, lower semicontinuous, and proper function such that $S=\argmin f \neq \emptyset$.
	Take    $\alpha >3$ and $\delta >1$, which gives $\alpha + 2\delta >5$.
	Let $x : [t_0, +\infty[ \to \mathcal{H}$ be a solution trajectory of
	\begin{equation}\label{2d-damped-id-p111-nonsmooth}
 \ddot x(t) +  \frac{\alpha + 2\delta}{t} \dot x(t) + \partial f\left[x(t)+\dfrac{t}{\alpha - 1}\dot x(t)\right] + \dfrac{2\delta (\alpha - 1)}{t^2} x(t)\ni 0
\end{equation}
which is taken in the sense of \eqref{eq:fos2b}, and which satisfies the initial conditions $x(t_{0}) \in \dom f$ and $\dot{x}(t_{0}) =0$.		
Then,  the following properties are satisfied. 	
\vspace{-2mm}
	\begin{eqnarray}
	&&(i) \,\,  f(x(t))-\min_{\cH} f= \mathcal O\left( \frac{1}{t^{2}}  \right) \mbox{ as } \; t \to +\infty;\label{contr:fx(t)22-nonsmooth}\\
	&& (ii) \,\, \mbox{ There is   strong convergence of } x(t)
	 \mbox{ to the minimum norm solution } x^*.\nonumber
	\end{eqnarray}
\end{theorem}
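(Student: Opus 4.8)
The plan is to follow the architecture of the proof of Theorem~\ref{thm:model-inertial-b}, substituting each smooth argument by the nonsmooth counterpart justified in the preamble above. First I would record that the initial data $x(t_0)\in\dom f$, $\dot x(t_0)=0$ force $v(t_0)=x(t_0)=x_0$, so that $x$ legitimately arises as an average of $v$; the existence and uniqueness of the strong solution of the Cauchy problem for \eqref{eq:fos2b} is guaranteed by its reformulation in $\cH\times\cH$ as a Lipschitz perturbation of a generalized steepest descent, and the differential calculus rules remain available along strong solutions.

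Second, I would treat the first line of \eqref{eq:fos2b}. Undoing the time scaling $s=\tau(t)=\frac{t^2}{2(\alpha-1)}$, the variable $v$ solves the generalized steepest descent with vanishing Tikhonov regularization $\dot z(s)+\partial f(z(s))+\frac{\delta}{s}z(s)\ni 0$. The Lyapunov analysis yielding Theorem~\ref{thm:model-ab} transcribes to the subdifferential setting essentially verbatim: it invokes only the strong convexity of $\varphi_s=f+\frac{\varepsilon(s)}{2}\|\cdot\|^2$, the properties of the viscosity curve (Lemma~\ref{lem-basic-c}), and the chain rule for the energy $E(\cdot)$, all of which persist for strong solutions of subdifferential flows. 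This delivers $f(v(t))-\min_\cH f=\mathcal O(1/t^2)$ and the strong convergence $v(t)\to x^*$.

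Third, I would integrate the second line of \eqref{eq:fos2b}. Exactly as in \eqref{change var30}--\eqref{proba-formulation}, multiplying by $t^{\alpha-1}$ gives $\frac{d}{dt}(t^{\alpha-1}x(t))=(\alpha-1)t^{\alpha-2}v(t)$, so that $x(t)=\int_{t_0}^t v(\theta)\,d\mu_t(\theta)$ for the probability measure $\mu_t$. Applying Jensen's inequality, now legitimate for the nonsmooth convex $f$ through Moreau--Yosida regularization and Beppo--Levi passage to the limit, to $F=f-\min_\cH f$ gives $F(x(t))\le\int_{t_0}^t F(v(\theta))\,d\mu_t(\theta)$; inserting the rate $F(v(\theta))=\mathcal O(1/\theta^2)$ and using $\alpha>3$ (so that the boundary term $t_0^{\alpha-3}/t^{\alpha-1}$ is dominated by $1/t^2$) yields claim~$(i)$. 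For claim~$(ii)$, I would write $x(t)$ as in the strong-convergence computation, isolating the term $\frac{\alpha-1}{t^{\alpha-1}}\int_{t_0}^t\theta^{\alpha-2}(v(\theta)-x^*)\,d\theta$, and show by the split-integral (ergodic) argument that it vanishes because $v(\theta)\to x^*$, whence $x(t)\to x^*$.

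The main obstacle is confirming that both the first-order convergence result for $v$ and Jensen's inequality hold with no smoothness of $f$. For the former, the delicate point is the chain-rule differentiation of $E(\cdot)$ along the strong solution of the subdifferential flow; for the latter, it is the interchange of limit and integral when passing from the regularizations $f_\lambda$ to $f$. Since both have been settled above, the rest of the argument is a direct transcription of the smooth proof, and the gradient estimate $(iii)$ of Theorem~\ref{thm:model-inertial-b} is simply dropped as meaningless in the nonsmooth setting.
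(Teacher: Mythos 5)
Your proposal is correct and follows essentially the same route as the paper: existence and uniqueness via the reformulation of \eqref{eq:fos2b} in $\cH\times\cH$ as a Lipschitz perturbation of the generalized steepest descent (Br\'ezis), transcription of the first-order Lyapunov analysis to the subdifferential flow using the calculus rules valid for strong solutions, and the averaging/Jensen argument (justified by Moreau--Yosida regularization and monotone convergence) together with the split-integral ergodic argument for strong convergence to $x^*$. The paper itself gives no more detail on the two delicate points you flag (the chain rule for $E(\cdot)$ along strong solutions and the nonsmooth Jensen inequality), handling them exactly as you do, so nothing is missing.
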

\begin{theorem}\label{thm:model-inertial-nonsmooth}
Let $f:\cH \to \rinf$ be a convex, lower semicontinuous, and proper function such that $S=\argmin f \neq \emptyset$.
	Take   $0<r< 1$ and $\alpha >1$.
	Let $x : [t_0, +\infty[ \to \mathcal{H}$ be a solution trajectory of
\begin{equation}\label{2d-damped-id-p1-t-nonsmooth}
\ddot x(t) + \left( \frac{\alpha}{t}  +\dfrac{2^r (\alpha - 1)^{r-1}}{t^{2r-1}}  \right)  \dot x(t) + \partial f\left(x(t)+\dfrac{t}{\alpha - 1}\dot x(t)\right) + 
\dfrac{2^r(\alpha - 1)^r}{t^{2r}} x(t)\ni 0
\end{equation}	
which is taken in the sense of \eqref{eq:fos2b}, and which satisfies the initial conditions $x(t_{0}) \in \dom f$ and $\dot{x}(t_{0}) =0$.		
Then,  the following properties are satisfied. 		
\vspace{-2mm}
	\begin{eqnarray}
	&&(i) \,\,  f(x(t))-\min_{\cH} f= \mathcal O\left( \frac{1}{t^{\alpha - 1}} + \frac{1}{t^{2r}} \right) \mbox{ as } \; t \to +\infty;\label{contr:fx(t)2-nonsmooth}\\
	&& (ii) \,\, \mbox{ There is   strong convergence of } x(t)
	 \mbox{ to the minimum norm solution } x^*.\nonumber
	\end{eqnarray}
\end{theorem}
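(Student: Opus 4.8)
The plan is to mirror the proof of the smooth counterpart, Theorem~\ref{thm:model-inertial}, the only genuine additions being a nonsmooth version of the first-order value estimate for the averaged variable $v$ and a nonsmooth version of Jensen's inequality. By the discussion preceding the statement, the Cauchy problem for \eqref{eq:fos2b} with $x(t_0)\in\dom f$ and $\dot x(t_0)=0$ admits a unique global strong solution $(v,x)$, obtained from the product-space formulation $\dot Z + \frac{t}{\alpha-1}\partial F(Z) + G(t,Z)\ni 0$ and the Lipschitz-perturbation theory of \cite[Proposition 3.12]{Bre1}. Because $\dot x(t_0)=0$ forces $v(t_0)=x(t_0)=x_0$, the variable $x$ is an \emph{honest} average of $v$, which is what makes the averaging argument applicable.

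First I would secure the first-order input, namely that the rescaled trajectory $v(t)=z(\tau(t))$, where $z$ solves the nonsmooth Tikhonov steepest descent $\dot z(s) + \partial f(z(s)) + \varepsilon(s)z(s)\ni 0$, still satisfies $f(v(t))-\min_\cH f = \mathcal O\big(\tau(t)^{-r}\big)$ and converges strongly to $x^*$. The point is that the entire Lyapunov analysis of Section~\ref{sec:Lyap} survives in the convex lsc proper setting: the viscosity curve is now $x_\varepsilon=\prox_{(1/\varepsilon)f}(0)$, the identity $\frac{d}{d\theta}f_\theta(x)=-\frac12\|\nabla f_\theta(x)\|^2$ for the (always $\mathcal{C}^1$) Moreau envelope keeps Lemma~\ref{lem1}~$(i)$ intact, monotonicity of $\partial f$ keeps Lemma~\ref{lem1}~$(ii)$, and strong convexity of $\varphi_t$ keeps Lemma~\ref{lem-basic-b}. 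The computation of $\dot E$ in Theorem~\ref{strong-conv-thm-b} then goes through once the term $\langle\nabla\varphi_t(x(t)),\dot x(t)\rangle$ is read as $\frac{d}{dt}\varphi_t(x(t))$ along the strong solution and $\nabla\varphi_t$ is replaced by $-\dot x$, the chain rule being licit for strong solutions by \cite[chapter VIII.2]{Bre2}. Strong convergence of $z$, hence of $v$, additionally follows from $\int_{t_0}^{+\infty}\varepsilon = +\infty$ via \cite[Theorem~2]{CPS}, which already covers subdifferentials.

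With this input, part $(i)$ is obtained exactly as in Theorem~\ref{thm:model-inertial}~$(i)$. The second equation of \eqref{eq:fos2b} is linear, so $\frac{d}{ds}\big(s^{\alpha-1}x(s)\big)=(\alpha-1)s^{\alpha-2}v(s)$ holds for the strong solution, and integrating exhibits $x(t)$ as a weighted mean of $v$. Applying Jensen's inequality to $F=f-\min_\cH f$ over these weights, together with the value rate for $v$, reproduces the monotonicity of $\Gamma(t)=t^{\alpha-1}F(x(t))-C\int^t s^{\alpha-2-2r}\,ds$ and hence the bound $F(x(t))=\mathcal O\big(t^{-(\alpha-1)}+t^{-2r}\big)$. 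The one new ingredient is that Jensen's inequality applies to the nonsmooth convex $f$: I would first write it for the Moreau--Yosida regularization $f_\lambda$ and then let $\lambda\downarrow 0$, the passage to the limit inside the integral being justified by the monotone convergence $f_\lambda\uparrow f$ and the Beppo--Levi theorem. Part $(ii)$ then follows from the same ergodic argument as in Theorem~\ref{thm:model-inertial}~$(ii)$: since $v(t)\to x^*$ strongly and $x$ is an average of $v$, splitting $\int_{s_0}^t\theta^{\alpha-2}\|v(\theta)-x^*\|\,d\theta$ at a threshold $s_a$ beyond which $\|v(\theta)-x^*\|<a$, and using the polynomial weight to annihilate the head term, yields $\|x(t)-x^*\|\to 0$.

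The main obstacle is the first step: certifying that the first-order Lyapunov machinery genuinely transfers to the nonsmooth regime. Concretely, one must justify the differentiation of $E$ along a strong (rather than classical) solution, that is, that $t\mapsto \varphi_t(x(t))$ is absolutely continuous with the expected derivative and that the inner product $\langle\dot x, x-x_{\varepsilon}\rangle$ can be paired with the subgradient inequality for the $\varepsilon$-strongly convex $\varphi_t$; this is where the convexity, the prox and Moreau envelope identities, and the calculus of \cite{Bre2} must be invoked with care, whereas the averaging and Jensen steps are routine adaptations. Note finally that, in contrast with the smooth case, no convergence rate for the subgradients is claimed, since its proof relied on the descent lemma and on the Lipschitz continuity of $\nabla f$, which are unavailable here.
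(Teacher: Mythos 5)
Your proposal is correct and follows essentially the same route as the paper, which itself only sketches this theorem by noting that the smooth arguments transfer: strong solutions via the product-space formulation and \cite[Proposition 3.12]{Bre1}, the chain rule for strong solutions from \cite[chapter VIII.2]{Bre2}, Jensen's inequality for nonsmooth convex $f$ via Moreau--Yosida regularization and Beppo--Levi monotone convergence, and \cite[Theorem 2]{CPS} for the strong convergence input. Your expansion faithfully identifies the points needing care (transfer of Lemmas \ref{lem-basic-b} and \ref{lem1} and of the Lyapunov computation to strong solutions, with $\nabla\varphi_t$ replaced by the subgradient $-\dot z$), and you correctly observe that the gradient-rate item is dropped because the descent lemma is unavailable.
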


\begin{remark}
As a consequence of $i)$, we have that $x(s)$ remains in the domain of $f$ for all $s\geq t_0$. This viability property strongly depends on the fact that the initial position belongs to the domain of $f$,  and that the initial velocity has been taken equal to zero.
\end{remark}

\section{Numerical illustrations}

The following simple examples illustrate the properties of the trajectories generated by the dynamics studied in the paper. They show that the trajectories verify both a rapid convergence of the values, the convergence towards the minimum norm solution, and a notable attenuation of the oscillations.
This highlights the additional properties obtained by the presence of the Tikhonov regularization term.
A detailed numerical study of these aspects should be the subject of further work.

\noindent We consider the  dynamical systems \eqref{edo001}, \eqref{edo001-0} and \eqref{1st-damped-id05} in $\mathbb R^2$:
\begin{eqnarray*}
\mbox{(AVD)}_{\alpha} &\hspace{1cm}& \ddot{x}(t) + \frac{\alpha}{t} \dot{x}(t) + \nabla f (x(t))=0,,\\
 \mbox{(AVD)}_{\alpha, \varepsilon} &&  \ddot{x}(t) + \frac{\alpha}{t} \dot{x}(t) + \Big(\nabla f  +\varepsilon(t)I\Big) x(t)=0, \vspace{2mm}\\
\hbox{(AVD-IH)}_{\alpha, \varepsilon} && \ddot x(t) +\frac{\alpha}{t}  \dot x(t) + \left(\nabla f  +\varepsilon \left(\dfrac{t^2}{2(\alpha - 1)}\right) I\right) \left(x(t)+\dfrac{t}{\alpha - 1}\dot x(t)\right)=0.
\end{eqnarray*}
Let us illustrate our results with the following two examples where the function $f$ is taken respectively strictly convex and convex with unbounded solution-set. 
 We choose $\alpha=3$, $\beta=1$ and $ \varepsilon(t)=t^{-r}$.
Our numerical tests were implemented in Scilab version 6.1 as an open source software.
\begin{example}\label{exemple1} 
Take $f_1  :  ]-1,+\infty[^{2} \to \R$ which is defined by $f_1(x)=\ln(1+e^{-(x+y)})+(x-y)^2+y.$
 The function $f_1$ is strictly convex with $x^*=(\frac18,-\frac18)$
the unique minimum. The trajectories corresponding to the three systems described above are shown in Fig. \ref{fig:trigs-c}.
\begin{figure} 
 \includegraphics[scale=0.35]{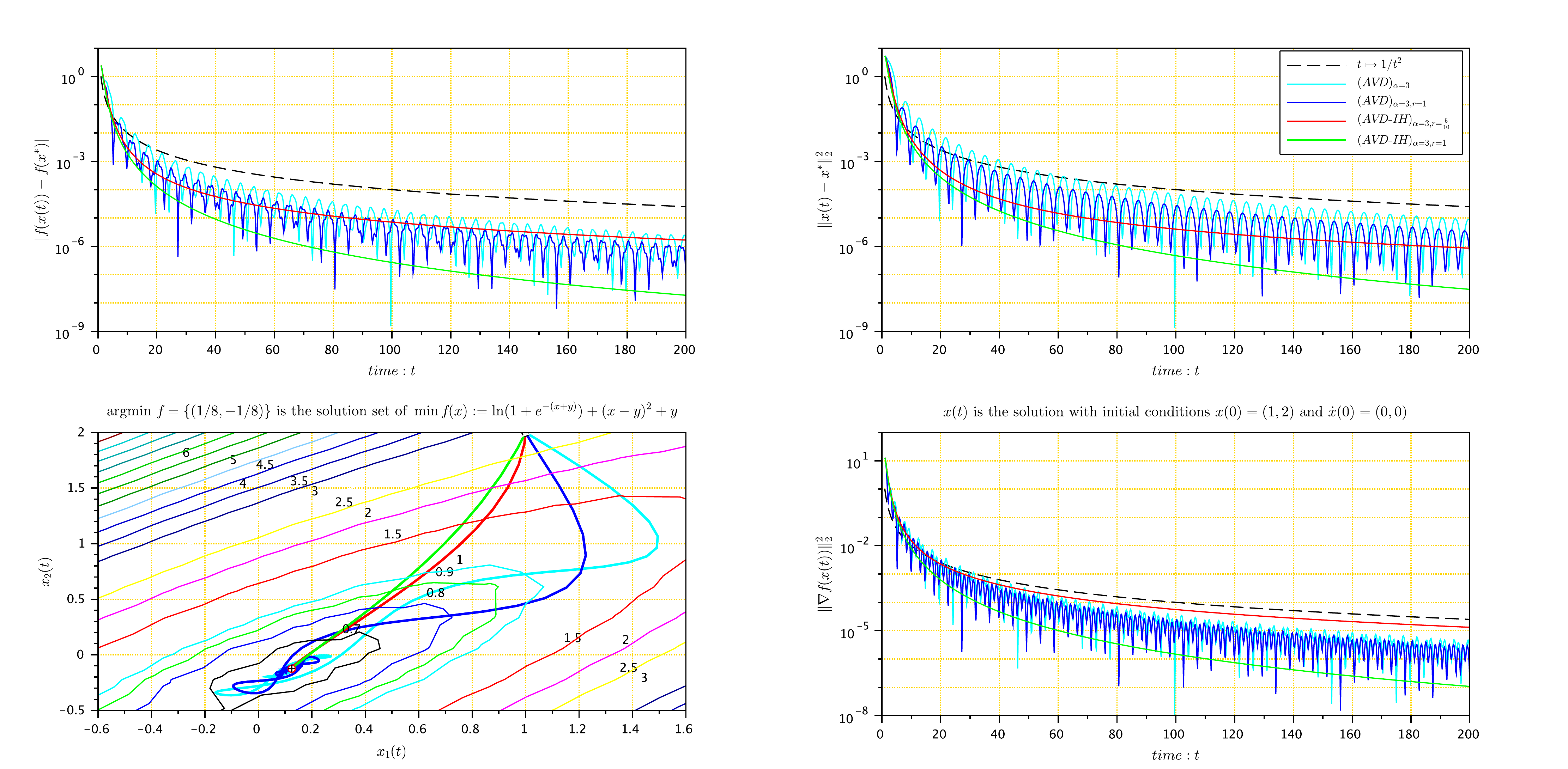}
  \caption{Convergence rates of
values $f_1(x(t))-f_1(x^*)$,  of trajectories $\|x(t)-x^*\|_2 $, and gradients  \; $\|\nabla f_1(x(t)\|_2$. Here $\hbox{argmin} f_1=\{\left( \frac18,-\frac18\right)\}$.}
 \label{fig:trigs-c}. 
\end{figure}
\end{example}
\begin{example}\label{exemple2} Consider the non-strictly convex  function $f_2  : \R^{2} \to \R$ defined by 
$f_2(x)=\frac12(x_1+x_2-1)^2 .$
The  set of solutions is $ \argmin f_2= \{(x_1,1-x_1): x_1\in\R\}$, and $x^*=(\frac12,\frac12)$ is the minimum norm solution. The trajectories corresponding to the systems  $\mbox{(AVD)}_{3}, \mbox{(AVD)}_{3, 1/t^2}$ and $\hbox{(AVD-IH)}_{3, 1/t^2}$ are represented in Fig. \ref{fig:trigs-convex}.

\begin{figure} 
  \includegraphics[scale=0.35]{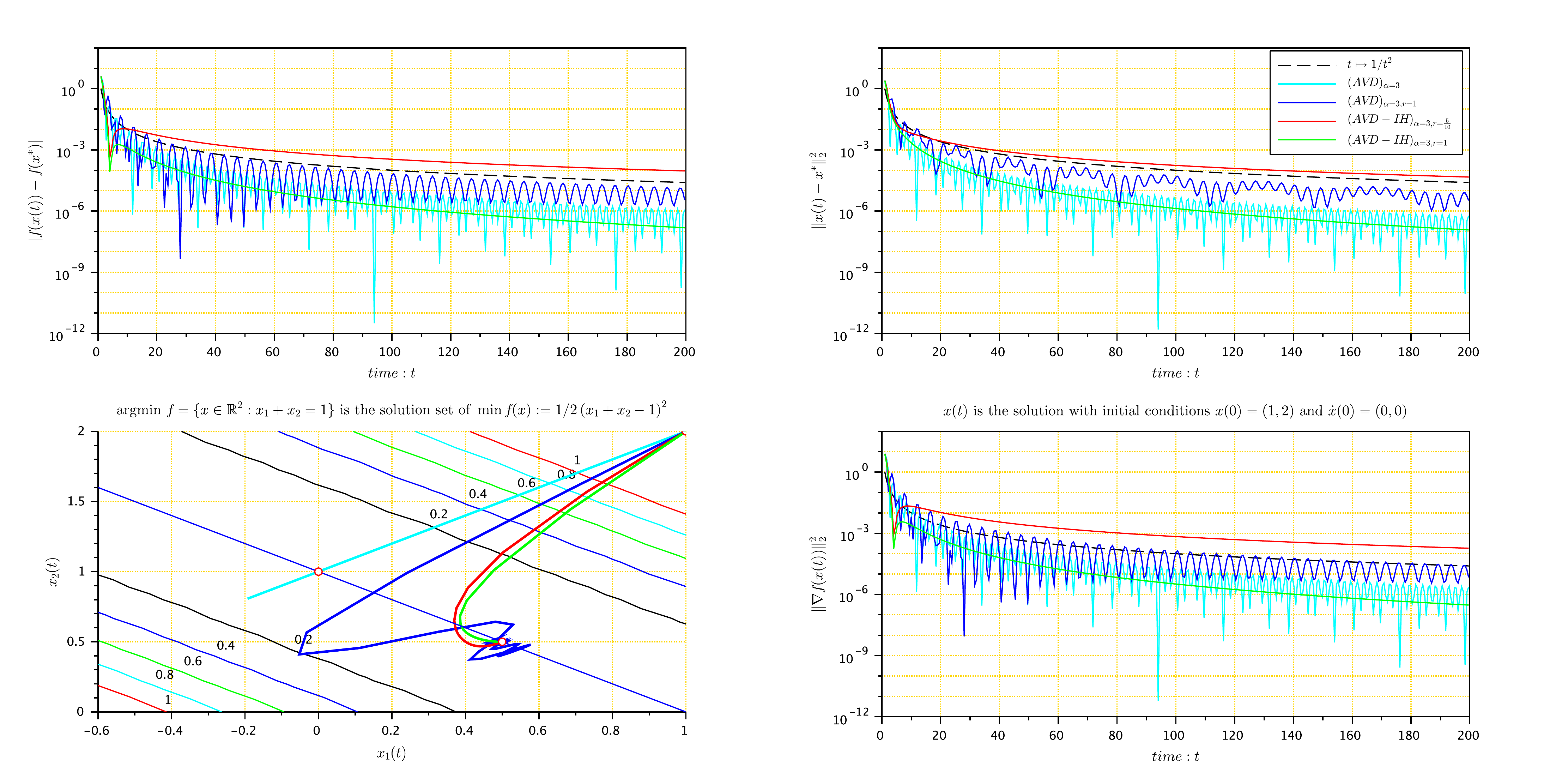}
  \caption{ Convergence rates of
values $f_2(x(t))-f_2(x^*)$, trajectories $\|x(t)-x^*\|_2^2$, and  gradients $\|\nabla f_2(x(t)\|_2^2$. Here $\hbox{argmin} f_1=\{(x,y)\in \mathbb R^2:x+y=1\}$. }
 \label{fig:trigs-convex} 
\end{figure}
\end{example}

\section{Conclusion, perspective}
The introduction of a Tikhonov regularization with vanishing coefficient in the optimization algorithms is beneficial in several respects.
For general convex optimization, instead of weak convergence of trajectories/iterations towards an optimal solution which depends on the initial state, it provides strong convergence towards  the minimum norm
solution. This is especially important for inverse problems, where one seeks a solution as close as possible to a desired state.
In this paper, we show that this can be achieved while preserving the fast convergence properties attached to the Nesterov accelerated gradient method.
Our approach is based on the Attouch-Bot-Nguyen scaling and time averaging technique, which proves to be flexible, and allows to address these issues in a unified way.
As a striking result, for the first time we obtain simultaneously the rapid convergence of the values $1/t^2$ and the strong  convergence towards the  minimum norm solution.
Let us mention some other interesting questions to examine further:

a) Our results open the way to the study of a new class of fast algorithms in convex optimization. Lyapunov's analysis of  continuous dynamics which support these algorithms will be a great help. It is probable that such algorithms share the good convergence properties of  continuous dynamics, according to the results obtained in \cite{ABotNguyen} concerning the time scaling and averaging method, and \cite{ABCR2} concerning the case without the Tikhonov regularization. 

b) An interesting and open question is whether a similar analysis can be developed using closed-loop Tikhonov regularization, i.e. the Tikhonov regularization coefficient is taken as a function of the current state of the system, see \cite{ABotCest} for a survey on these questions.

c) The Tikhonov regularization and the property of convergence to the minimum norm solution is a particular case of the general hierarchical principle which is attached to the viscosity method, see \cite{Att2}. In the context of numerical optimization, as an example, it is therefore natural to extend our study to the logarithmic barrier method for linear programming and to the selection of the analytical center.

\section{Appendix}
The following Lemma provides an extended  version of the classical gradient lemma which is valid for differentiable convex functions. The following version has been obtained in \cite[Lemma 1]{ACFR}, \cite{ACFR-Optimisation}.
We reproduce its proof for the convenience of the reader.

\begin{lemma}\label{ext_descent_lemma}
Let  $f: \cH \to \R$ be  a  convex function whose gradient is $L$-Lipschitz continuous. Let $s \in ]0,1/L]$. Then for all $(x,y) \in \cH^2$, we have
\begin{equation}\label{eq:extdesclem}
f(y - s \nabla f (y)) \leq f (x) + \left\langle  \nabla f (y), y-x \right\rangle -\frac{s}{2} \|  \nabla f (y) \|^2 -\frac{s}{2} \| \nabla f (x)- \nabla f (y) \|^2 .
\end{equation}
In particular, when $\argmin f \neq \emptyset$, we obtain that for any $x\in \cH$
\begin{equation}\label{eq:extdesclemb}
f(x) \geq \min_{\cH} f  +\frac{1}{2L} \| \nabla f (x)\|^2 .
\end{equation}
\end{lemma}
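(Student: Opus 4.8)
The plan is to deduce \eqref{eq:extdesclem} from two consequences of the $L$-Lipschitz continuity of $\nabla f$: the classical descent lemma, and a co-coercivity-type strengthening of the gradient inequality. The second assertion \eqref{eq:extdesclemb} will then follow by specializing the first to $x=y$ and bounding the left-hand side below by $\min_{\cH} f$.

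First I would establish the descent lemma $f(u)\leq f(v)+\dotp{\nabla f(v)}{u-v}+\frac{L}{2}\norm{u-v}^2$ for all $u,v\in\cH$, obtained by writing $f(u)-f(v)=\int_0^1\dotp{\nabla f(v+t(u-v))}{u-v}\,dt$ and bounding the integrand via Cauchy--Schwarz together with Lipschitz continuity of $\nabla f$. Applying it with $u=y-s\nabla f(y)$ and $v=y$ gives $f(y-s\nabla f(y))\leq f(y)-s\norm{\nabla f(y)}^2+\frac{Ls^2}{2}\norm{\nabla f(y)}^2$; since $s\leq 1/L$ forces $\frac{Ls^2}{2}\leq\frac{s}{2}$, this yields the descent step
\begin{equation*}
f(y-s\nabla f(y))\leq f(y)-\frac{s}{2}\norm{\nabla f(y)}^2.
\end{equation*}

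The key step, and the one I expect to be the main obstacle, is the strengthened inequality $f(y)\leq f(x)+\dotp{\nabla f(y)}{y-x}-\frac{1}{2L}\norm{\nabla f(x)-\nabla f(y)}^2$, where getting the co-coercivity constant $\frac{1}{2L}$ right is the delicate point. To obtain it I would introduce the auxiliary function $g(z):=f(z)-f(y)-\dotp{\nabla f(y)}{z-y}$, which is convex, $L$-smooth, nonnegative by convexity of $f$, and minimized at $z=y$ with $g(y)=0$ and $\nabla g(z)=\nabla f(z)-\nabla f(y)$. Applying the descent step already established to $g$ at the point $x$ with $s=1/L$ gives $g\!\left(x-\tfrac{1}{L}\nabla g(x)\right)\leq g(x)-\frac{1}{2L}\norm{\nabla g(x)}^2$; since $g\geq 0=\min g$, the left-hand side is nonnegative, whence
\begin{equation*}
g(x)\geq\frac{1}{2L}\norm{\nabla f(x)-\nabla f(y)}^2,
\end{equation*}
which is exactly the claimed inequality after expanding $g(x)$. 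Combining it with the descent step and using once more $\frac{1}{2L}\geq\frac{s}{2}$ to weaken the coefficient of $\norm{\nabla f(x)-\nabla f(y)}^2$ produces \eqref{eq:extdesclem}.

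Finally, for \eqref{eq:extdesclemb} I would set $x=y$ in \eqref{eq:extdesclem}, which collapses both the inner product and the gradient-difference term to zero and recovers the descent step; taking $s=1/L$ and using that the left-hand side is at least $\min_{\cH} f$ (attained since $\argmin f\neq\emptyset$) gives $\min_{\cH} f\leq f(x)-\frac{1}{2L}\norm{\nabla f(x)}^2$, which is the desired bound. The only bookkeeping to watch is that the step-size constraint $s\leq 1/L$ is invoked twice, once to pass from $\frac{Ls^2}{2}$ to $\frac{s}{2}$ in the descent step and once to relax $\frac{1}{2L}$ down to $\frac{s}{2}$ when merging the two estimates.
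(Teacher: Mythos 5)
Your proof is correct, and its key step takes a genuinely different route from the paper's. Both arguments split \eqref{eq:extdesclem} into the same two pieces: the descent step $f(y-s\nabla f(y))\leq f(y)-\frac{s}{2}\|\nabla f(y)\|^2$ (identical in both) and the strengthened gradient inequality $f(y)\leq f(x)+\langle\nabla f(y),y-x\rangle-\frac{1}{2L}\|\nabla f(x)-\nabla f(y)\|^2$. For the latter, the paper argues by duality: it writes $f(y)$ via the Fenchel identity, invokes the equivalence between $L$-Lipschitz continuity of $\nabla f$ and $\frac{1}{L}$-strong convexity of $f^*$ together with $(\nabla f)^{-1}=\partial f^*$, and inserts the resulting strong-convexity inequality for $f^*$ at the points $\nabla f(x)$, $\nabla f(y)$. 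You instead stay entirely primal: you apply the already-proved descent step with $s=1/L$ to the nonnegative auxiliary function $g(z)=f(z)-f(y)-\langle\nabla f(y),z-y\rangle$, whose minimum value is zero, and read off $g(x)\geq\frac{1}{2L}\|\nabla g(x)\|^2$. This is the classical Nesterov-style proof of co-coercivity; it is more elementary in that it needs no conjugacy machinery and reuses the descent step you already have, so the whole lemma rests on a single tool applied twice. The paper's dual route is shorter once the smoothness/strong-convexity duality for conjugates is granted, and it makes that structural correspondence explicit. The constants come out identically, your bookkeeping of the two uses of $s\leq 1/L$ (once for $\frac{Ls^2}{2}\leq\frac{s}{2}$, once to relax $\frac{1}{2L}$ to $\frac{s}{2}$) is right, and your derivation of \eqref{eq:extdesclemb} by setting $x=y$, taking $s=1/L$, and bounding the left side by $\min_{\cH}f$ is valid — indeed the paper's written proof stops at \eqref{eq:extdesclem}, so you supply a step it leaves implicit.
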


\begin{proof}
Denote $y^+=y - s \nabla f (y)$. By the standard descent lemma applied to $y^+$ and $y$, and since $sL \leq 1$ we have
\begin{equation}\label{eq:descfm2}
f(y^+) \leq f(y) - \frac{s}{2}\pa{2-Ls} \| \nabla f (y) \|^2 \leq f(y) - \frac{s}{2} \|  \nabla f (y) \|^2.
\end{equation}
We now argue by duality between strong convexity and Lipschitz continuity of the gradient of a convex function. Indeed, using Fenchel identity, we have
\[
f(y) = \dotp{\nabla f(y)}{y} - f^*(\nabla f(y)) .
\]
$L$-Lipschitz continuity of the gradient of $f$ is equivalent to $1/L$-strong convexity of its conjugate $f^*$. This together with the fact that $(\nabla f)^{-1}=\partial f^*$ gives for all $(x,y) \in \cH^2$,
\[
f^*(\nabla f(y)) \geq  f^*(\nabla f(x)) + \dotp{x}{\nabla f(y)-\nabla f(x)} + \frac{1}{2L}\norm{\nabla f(x)-\nabla f(y)}^2 .
\]
Inserting this inequality into the Fenchel identity above yields
\begin{align*}
f(y) 
&\leq - f^*(\nabla f(x)) + \dotp{\nabla f(y)}{y} - \dotp{x}{\nabla f(y)-\nabla f(x)} - \frac{1}{2L}\norm{\nabla f(x)-\nabla f(y)}^2 \\
&= - f^*(\nabla f(x)) + \dotp{x}{\nabla f(x)} + \dotp{\nabla f(y)}{y-x} - \frac{1}{2L}\norm{\nabla f(x)-\nabla f(y)}^2 \\
&= f(x) + \dotp{\nabla f(y)}{y-x} - \frac{1}{2L}\norm{\nabla f(x)-\nabla f(y)}^2  \\
&\leq f(x) + \dotp{\nabla f(y)}{y-x} - \frac{s}{2}\norm{\nabla f(x)-\nabla f(y)}^2 .
\end{align*}
Inserting the last bound into \eqref{eq:descfm2} completes the proof.
\end{proof}

\end{document}